\crefname{section}{§}{§§}
\Crefname{section}{§}{§§}
\let
\renewcommand{\section}{\vspace{8pt plus 4pt}\oldsection}
\definecolor{olive}{rgb}{0.3, 0.4, .1}
\definecolor{fore}{RGB}{249,242,215}
\definecolor{back}{RGB}{51,51,51}
\definecolor{title}{RGB}{255,0,90}
\definecolor{dgreen}{rgb}{0.,0.6,0.}
\definecolor{gold}{rgb}{1.,0.84,0.}
\definecolor{JungleGreen}{cmyk}{0.99,0,0.52,0}
\definecolor{BlueGreen}{cmyk}{0.85,0,0.33,0}
\definecolor{RawSienna}{cmyk}{0,0.72,1,0.45}
\definecolor{Magenta}{cmyk}{0,1,0,0}
\newtheorem{thm}{Theorem}[subsection]
\newtheorem{lemma}[thm]{Lemma}
\newtheorem{prop}[thm]{Proposition}
\theoremstyle{definition}
\newtheorem{defn}{Definition}[subsection]
\theoremstyle{remark}
\newtheorem{rem}{Remark}[subsection]
\newtheorem{example}{Example}[subsection]
\numberwithin{equation}{section}
\DeclareMathOperator{\sing}{\sin \!g}
\DeclareMathOperator{\cosg}{\cos\!g}
\DeclareMathOperator{\tang}{\tan\!g}
\DeclareMathOperator{\secg}{\sec\!g}
\DeclareMathOperator{\cotg}{\cot\!g}
\DeclareMathOperator{\cscg}{\csc\!g}
\DeclareMathOperator{\gd}{\frac{d^G}{dx^G}}
\begin{document}
\begin{center}\large{{\bf{Bigeometric Calculus and its applications}}} 
\vspace{0.5cm}

Khirod Boruah and Bipan Hazarika$^{\ast}$ 

\vspace{0.5cm}
Department of Mathematics, Rajiv Gandhi University, Rono Hills, Doimukh-791112, Arunachal Pradesh, India\\

Email: khirodb10@gmail.com; bh\_rgu@yahoo.co.in
\thanks{$^{\ast}$ The corresponding author.}
\end{center}
\title{}
\author{}
\thanks{{\today}}
\begin{abstract} Based on M. Grossman in \cite{Grossman83} and Grossman an Katz \cite{GrossmanKatz}, in this  paper we discuss about the applications of  bigeometric calculus in different branches of mathematics  and economics. 

\parindent=5mm
\noindent{\footnotesize {\bf{Keywords and phrases:}}} Geometric real numbers; geometric arithmetic; bigeometric-derivative; bigeometric-continuity.\\
{\footnotesize {\bf{AMS subject classification \textrm{(2000)}:}}} 26A06, 11U10, 08A05, 46A45.

\end{abstract}
\maketitle

\maketitle

\pagestyle{myheadings}
\markboth{\scriptsize  Boruah and Hazarika}
        {\scriptsize  $G$-Calculus}

\maketitle\vspace{-0.4cm}
\section{Introduction}
In the  area of non-Newtonian calculus pioneering work carried out  by Grossman and Katz \cite{GrossmanKatz}  which we call as multiplicative calculus. The operations of multiplicative calculus are called as multiplicative derivative and multiplicative integral. We refer to Grossman and Katz \cite{GrossmanKatz}, Stanley \cite{Stanley}, Campbell \cite{Campbell}, Bashirov et
al. \cite{BashirovMisirh,BashirovKurpinar}, Grossman  \cite{Grossman83, Grossman}, Jane Grossman \cite{JaneGrossman1,JaneGrossman2} for different types of Non-Newtonian calculus and
its applications. An extension of multiplicative calculus to functions of complex variables is
handled in Bashirov and R\i za \cite{BashirovRiza}, Uzer \cite{Uzer10}, Bashirov et al. \cite{BashirovKurpinar}, \c{C}akmak and Ba\c{s}ar \cite{CakmakBasar}, Tekin and Ba\c{s}ar\cite{TekinBasar}, T\"{u}rkmen and Ba\c{s}ar \cite{TurkmenBasar}.  The generalized Runge-Kutta method with
respect to non-Newtonian calculus studied by Kadak and  \"{O}zl\"{u}k \cite{KADAK3}. 

Bigeometric-calculus is an alternative to the usual calculus of Newton and Leibniz. It provides differentiation and integration tools based on multiplication instead of addition. Every property in Newtonian calculus has an analog in Bigeometric-calculus. Generally, in growth related problems, price elasticity, numerical approximations problems Bigeometric-calculus can be advocated instead of a traditional Newtonian one.

Throughout the article for our convenience we will used "$G$-Calculus" instead of "Bigeometric-Calculus".
\section{$\alpha$-generator and geometric real field}
A $generator$ is a one-to-one function whose domain is $\mathbb{R}$(the set of real numbers) and whose range is a subset $B\subset \mathbb{R}.$ Each generator generates exactly one arithmetic and each arithmetic is generated by exactly one generator. For example, the identity function generates classical arithmetic, and exponential function generates geometric arithmetic. As a generator, we choose the function $\alpha$ such that whose basic algebraic operations are defined as follows: 
\begin{align*}
&\alpha-\text{addition} &x\dot{+}y &=\alpha[\alpha^{-1}(x) + \alpha^{-1}(y)]\\
&\alpha-\text{subtraction} &x\dot{-}y&=\alpha[\alpha^{-1}(x) - \alpha^{-1}(y)]\\
&\alpha-\text{multiplication} &x\dot{\times}y &=\alpha[\alpha^{-1}(x) \times \alpha^{-1}(y)]\\
&\alpha-\text{division} &\dot{x/y}&=\alpha[\alpha^{-1}(x) / \alpha^{-1}(y)]\\
&\alpha-\text{order} &x\dot{<}y &\Leftrightarrow \alpha^{-1}(x) < \alpha^{-1}(y).
\end{align*}
for $x, y \in A,$ where $A$ is a domain of the function $\alpha.$

If we choose \textit{$exp$} as an $\alpha-generator$ defined by $\alpha (z)= e^z$ for $z\in \mathbb{C}$ then $\alpha^{-1}(z)=\ln z$ and $\alpha-arithmetic$ turns out to geometric arithmetic.
\begin{align*}
&\alpha -\text{addition} &x\oplus y &=\alpha[\alpha^{-1}(x) + \alpha^{-1}(y)]& = e^{(\ln x+\ln y)}& =x.y\\
&&&&& \quad \text{geometric addition}\\
&\alpha-\text{subtraction} &x\ominus y&=\alpha[\alpha^{-1}(x) - \alpha^{-1}(y)]&= e^{(\ln x-\ln y)} &=  x\div y, y\ne 0\\
&&&&& \quad \text{geometric ~subtraction}\\
&\alpha-\text{multiplication} &x\odot y &=\alpha[\alpha^{-1}(x) \times\alpha^{-1}(y)]& = e^{(\ln x\times\ln y)} & = ~x^{\ln y}\\
&&&&& \quad \text{geometric ~multiplication}\\
&\alpha-\text{division} &x\oslash y&=\alpha[\alpha^{-1}(x) / \alpha^{-1}(y)] & = e^{(\ln x\div \ln y)}& = x^{\frac{1}{\ln y}}, y\ne 1 \\
&&&&& \quad \text{geometric ~division}.
\end{align*}
It is obvious that $\ln(x) < \ln(y)$ if $x<y$ for $x, y \in \mathbb{R}^+.$ That is, $x<y \Leftrightarrow \alpha^{-1}(x) < \alpha^{-1}(y)$ So, without loss of generality, we use $x<y$ instead of the geometric order $x\dot{<}y .$

C. T\"{u}rkmen and F. Ba\c{s}ar \cite{TurkmenBasar} defined the sets of geometric integers, geometric real numbers and geometric complex numbers $\mathbb{Z}(G), \mathbb{R}(G)$ and $\mathbb{C}(G),$  respectively, as follows:
\begin{align*}
\mathbb{Z}(G)&=\{ e^{x}: x\in \mathbb{Z}\}\\
\mathbb{R}(G)&=\{ e^{x}: x\in \mathbb{R}\} = \mathbb{R^+}\backslash \{0\}\\
\mathbb{C}(G)&=\{ e^{z}: z\in \mathbb{C}\} = \mathbb{C}\backslash \{0\}.
\end{align*}
If we take extended real number line, then $\mathbb{R}(G)=[0, \infty].$
\begin{rem}
 $(\mathbb{R}(G), \oplus, \odot)$ is a field with geometric zero $1$ and geometric identity $e,$ since
\begin{itemize}
\item[(1).] $(\mathbb{R}(G), \oplus)$ is a geometric additive Abelian group with geometric zero $1,$
\item[(2).] $(\mathbb{R}(G)\backslash 1, \odot)$ is a geometric multiplicative Abelian group with geometric identity $e,$
\item[(3).] $\odot$ is distributive over $\oplus.$ 
\end{itemize}
\end{rem}
But $(\mathbb{C}(G), \oplus, \odot)$ is not a field, however, geometric binary operation $\odot$ is not associative in $\mathbb{C}(G)$. For, we take
$x = e^{1/4}, y = e^4$ and $z = e^{(1 + i\pi/2)}= ie.$ Then
$(x\odot y)\odot z = e \odot z = z = ie$ but $x\odot(y\odot z) = x\odot e^4 = e.$

Let us define geometric positive real numbers and geometric negative real numbers as follows:
\[\mathbb{R}^+(G)=\{x\in \mathbb{R}(G) : x >1\}\]
\[\mathbb{R}^-(G)=\{x\in \mathbb{R}(G) : x <1\}.\]
\subsection{Some useful relations between geometric operations and ordinary arithmetic operations}
For all $x, y\in \mathbb{R}(G)$
\begin{itemize}
\item{ $x\oplus y=xy$}
\item{ $x\ominus y=x/y$}
\item{ $x\odot y=x^{\ln y}=y^{\ln x}$}
\item{ $x\oslash y$ or $\frac{x}{y}G=x^{\frac{1}{\ln y}}, y\neq 1$}
\item{ $x^{2_G}= x \odot x=x^{\ln x}$}
\item{ $x^{p_G}=x^{\ln^{p-1}x}$}
\item{ ${\sqrt{x}}^G=e^{(\ln x)^\frac{1}{2}}$}
\item{ $x^{-1_G}=e^{\frac{1}{\log x}}$}
\item{ $x\odot e=x$ and $x\oplus 1= x$}
\item{ $e^n\odot x=x\oplus x\oplus .....(\text{upto $n$ number of $x$})=x^n$}
\item{
\begin{equation*}
\left|x\right|^G=
\begin{cases}
x, &\text{if $x>1$}\\
1,&\text{if $x=1$}\\
\frac{1}{x},&\text{if $0<x<1$}
\end{cases}
\end{equation*}}
Thus $\left|x\right|^G\geq 1.$
\item{ ${\sqrt{x^{2_G}}}^G=\left|x\right|^G$}
\item{ $\left|e^y\right|^G=e^{\left|y\right|}$}
\item{ $\left|x\odot y\right|^G=\left|x\right|^G \odot \left|y\right|^G$}
\item{ $\left|x\oplus y\right|^G \leq\left|x\right|^G \oplus \left|y\right|^G$}
\item{ $\left|x\oslash y\right|^G=\left|x\right|^G \oslash \left|y\right|^G$}
\item{ $\left|x\ominus y\right|^G\geq\left|x\right|^G \ominus \left|y\right|^G$}
\item{ $0_G \ominus 1_G\odot\left(x \ominus y\right)=y\ominus x\, i.e.$ in short $\ominus \left(x \ominus y\right)= y\ominus x.$}
\end{itemize}
Further $e^{-x}=\ominus e^x$ holds for all $x\in \mathbb{Z}^+ .$ Thus the set of all geometric integers turns out to the following:\\
$\mathbb{Z}(G)=\{....,e^{-3},e^{-2},e^{-1},e^{0},e^1,e^2,e^3,....\}=\{....,\ominus e^3, \ominus e^2, \ominus e, 1,e,e^2,e^3,....\}.$
\section{Definitions and Notations}
Now we recall some definitions and results discussed  in \cite{KhirodBipan, KhirodBipan3}.
\subsection{Geometric Binomial Formula}
\begin{eqnarray*}
&(i)~~(a\oplus b)^{2_G}&=a^{2_G}\oplus e^2\odot a\odot b\oplus b^{2_G}.\\
&(ii)~~(a\oplus b)^{3_G}&=a^{3_G}\oplus e^3\odot a^{2_G}\odot b\oplus e^3\odot a\odot b^{2_G}\oplus b^{3_G}.\\
\text{In general}&&\\ 
&(iii)~~(a\oplus b)^{n_G}&=a^{n_G}\oplus e^{\binom{n}{1}}\odot a^{(n-1)_G}\odot b\oplus e^{\binom{n}{2}}\odot a^{(n-2)_G}\odot b^{2_G}\oplus....\oplus b^{n_G}\\
                      &&= _G\sum_{r=0}^n e^{\binom{n}{r}}\odot a^{(n-r)_G}\odot b^{r_G}.\\
											\text{Similarly} &&\\
											&~~~~(a\ominus b)^{n_G}&= _G\sum_{r=0}^n \left(\ominus e\right)^{r_G}\odot e^{\binom{n}{r}}\odot a^{(n-r)_G}\odot b^{r_G}.
\end{eqnarray*}
\textit{Note}: $x\oplus x =x^2.$ Also $e^2\odot x= x^{\ln (e^2)}=x^2.$ So, $e^2\odot x = x^2= x\oplus x.$

\subsection{Geometric Real Number Line}
 For $x, y\in \mathbb{R(G)},$ there exist $u, v \in \mathbb{R}$ such that $x=e^u$ and $y=e^v.$ Also consecutive natural numbers are equally spaced by one unit in real number line, but the geometric integers $e, e^2, e^3,...$ are not equally spaced in ordinary sense, e.g. $e^2-e=4.6708$(approx.), $e^3-e^2=12.6965$(approx.). But they are geometrically equidistant as $e^2\ominus e= e^{2-1}=e,e^3\ominus e^2= e^{3-2}=e$ etc. Furthermore, it can be easily verified that $(\mathbb{R(G)}, \oplus, \odot)$ is a complete field with geometric identity $e$ and geometric zero $1.$ So we can consider a new type of geometric real number line.

\subsection{Geometric Co-ordinate System}
We consider two mutually perpendicular geometric real number lines which intersect each other at $(1, 1)$ as shown in FIGURE \ref{gcs}.

\begin{figure}
\centering
\includegraphics[width=0.4\textwidth]{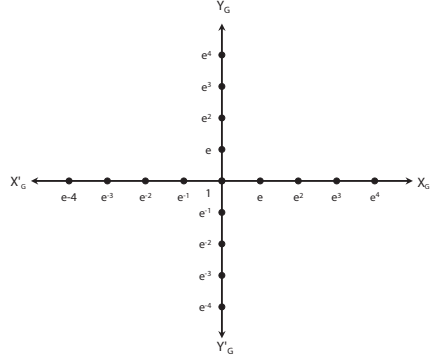}
\caption{Geometric Co-ordinate System}
\label{gcs}
\end{figure}

Since consecutive geometric integers are equidistant in geometric sense and $(\mathbb{R}(G), \oplus , \odot)$ is a complete field, so almost all the properties of ordinary cartesian coordinate system will be valid for geometric coordinate system under geometric arithmetic.
\subsection{Geometric Factorial}
In \cite{KhirodBipan}, we defined  geometric factorial notation $!_G$ as
\[n!_G=e^n\odot e^{n-1}\odot e^{n-2}\odot \cdots \odot e^2\odot e =e^{n!}.\]
\subsection{Generalized Geometric Forward Difference Operator}
Let
\begin{align*}
\Delta_G f(a)   &= f(a\oplus h) \ominus f(a).\\
\Delta^2_G f(a) &= \Delta_G f(a\oplus h) \ominus \Delta_G f(a)\\              
								&=f(a\oplus e^2\odot h) \ominus e^2 \odot f(a\oplus h)\oplus f(a).\\
\Delta^3_G f(a) &= \Delta^2_G f(a\oplus h) \ominus \Delta^2_G f(a)\\
								&=f(a\oplus e^3\odot h) \ominus e^3 \odot f(a\oplus e^2\odot h)\oplus e^3\odot f(a \oplus h) \ominus f(a).
\end{align*}
Thus, $n^{\textrm{th}}$ forward difference is
\[\Delta^n_G f(a)= _G\sum^n_{k=0} (\ominus e )^{{k}_G}\odot e^{\binom{n}{k}}\odot f(a\oplus e^{n-k}\odot h), \text{with}\, (\ominus e)^{0_G}=e\]
\subsection{Generalized Geometric Backward Difference Operator}
 Let
\begin{align*}
\nabla_G f(a)   &=f(a) \ominus f(a\ominus h).\\
\nabla^2_G f(a) &= \nabla_G f(a) \ominus \nabla_G f(a\ominus h)\\
								&= f(a)\ominus  e^2 \odot f(a \ominus h)\oplus f(a \ominus e^2 \odot h).\\
\nabla^3_G f(a) &= \nabla^2_G f(a) \ominus \nabla^2_G f(a-h)\\
								&=f(a) \ominus e^3 \odot f(a \ominus h)\oplus e^3\odot f(a \ominus e^2 \odot h) \ominus f(a\ominus e^3 \odot h).
\end{align*}
Thus, $n^{\text{th}}$ geometric backward difference is
\[\nabla^n_G f(a)= _G\sum^n_{k=0} (\ominus e )^{{k}_G}\odot e^{\binom{n}{k}}\odot f(a\ominus e^k\odot h).\]
\section{New Results}
\subsection{Geometric Pythagorean Triplets}
 Three numbers $x, y, z \in \mathbb{R}(G)$ are said to be formed a geometric  Pythagorean triplet if 
\begin{equation}\label{eqn1}
x^{2_G}=y^{2_G}\oplus z^{2_G}.
\end{equation}
Or, equivalently 
\begin{equation*}
x^{\ln x}=y^{\ln y}. z^{\ln z}.
\end{equation*}
Taking natural log to both sides, we get
\begin{equation*}
(\ln x)^2=(\ln y)^2 + (\ln z)^2.
\end{equation*}
Thus, if $\{x, y, z\}\subset \mathbb{R}(G)$ is a geometric Pythagorean triplet(GPT), then $\{\ln x, \ln y,\ln z\}$ forms an ordinary Pythagorean triplet(OPT). Conversely, if $\{a, b, c\}$ is an OPT, then $\{e^a,e^b, e^c\}$ forms a GPT where $a, b, c\in \mathbb{R}$ are non-negative. For example, $\{3, 4, 5\}$ is an OPT as $5^2= 3^2 + 4^2.$ So, $\{e^3, e^4, e^5\}$ is a GPT. Since, for any given positive integer $m$ we can form an OPT as $\{m^2-1, 2m, m^2+1\},$ similarly we can form infinite number of GPT. 

\begin{defn}[Geometric Right Triangle] In the geometric co-ordinate system, if geometric lengths of the three sides of a triangle represent a GPT, then the triangle will be called geometric right triangle. i.e. if $h, p, b$ are lengths of the three sides of a triangle such that $h^{2_G}=p^{2_G}\oplus b^{2_G},$ then the triangle is called a geometric right triangle with hypotenuse $h.$
\end{defn}
 It is to be noted that a GPT does not form a triangle with respect to the ordinary co-ordinate system. For example, GPT $\{e^3, e^4, e^5\}$ never forms a triangle as $e^3 + e^4 < e^5,$ i.e. sum of two sides is less than the third side, which is impossible for a triangle. Also, to form a geometric triangle, length of each side must be greater than $1.$
\begin{defn}
\begin{align*}
\text{Area of geometric right triangle~} &=\ln\left(\sqrt{\text{base $\odot $ altitude}}\right)\\
                                         &={\frac{\ln(\text{base}).\ln(\text{altitude})}{2}}.
\end{align*}
\end{defn}
Now we define a new trigonometric ratios with respect to the geometric right triangle. This geometric trigonometry will give us the geometrical interpretation of G-calculus. With respect to geometric right triangle, denote trigonometric ratios $\sin, \cos, \tan, \cot, \sec$ and $\csc $ respectively as $\sing, \cosg, \tang, \cotg, \secg$ and $\cscg.$ 
\subsection{Geometric Trigonometric Ratios}
 Let $\theta$ be an acute angle of a geometric right triangle and length of the sides be $h, p, b\in \mathbb{R}(G),$ respectively such that
\begin{align*}
h&= \text{hypotenuse}\\
p&=\text{side opposite to the angle $\theta$}\\
b&=\text{side adjacent to the angle $\theta.$}
\end{align*}
 Then we define 
\begin{align*}
&\sing \theta= \frac{p}{h}\textrm{G}=p^{\frac{1}{\ln h}} \quad &\cscg \theta= \frac{h}{p}G=h^{\frac{1}{\ln p}}\\
&\cosg \theta= \frac{b}{h}\textrm{G}=b^{\frac{1}{\ln h}} \quad &\secg \theta= \frac{h}{b}G=h^{\frac{1}{\ln b}}\\
&\tang \theta= \frac{p}{b}\textrm{G}=p^{\frac{1}{\ln b}} \quad &\cotg \theta= \frac{b}{p}G=b^{\frac{1}{\ln p}}
\end{align*}
\begin{wrapfigure}{r}{0.4\textwidth} 
\vspace{-20pt}
\begin{center}
\includegraphics[width=0.4\textwidth]{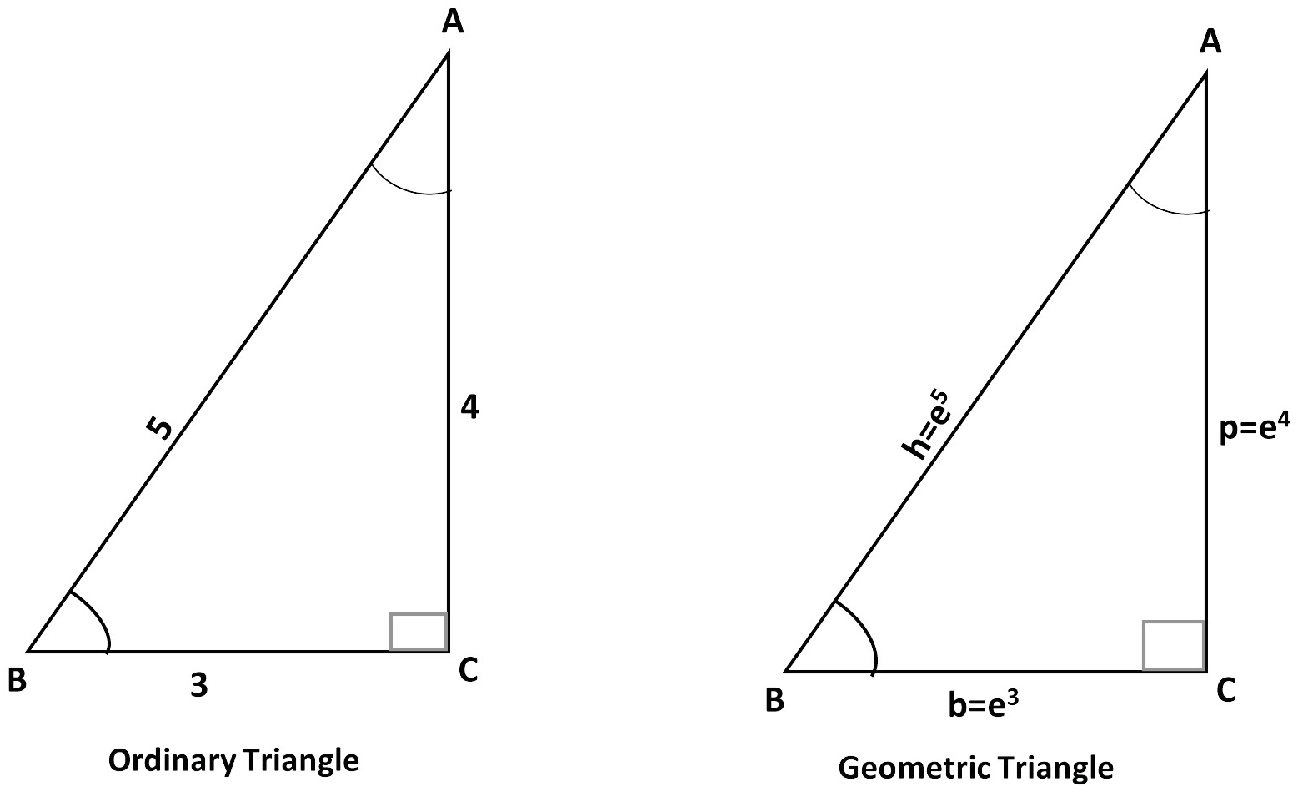}
\end{center}
\vspace{-20pt}
\caption{}
\vspace{-10pt}
\label{rt}
\end{wrapfigure}
\subsection{Relation between geometric trigonometry and ordinary trigonometry}
It is clear that $n$ unit length in ordinary coordinate system is equal to $e^n$ unit in geometric coordinate system. So properties of the ordinary right triangle having sides of lengths $3, 4, 5$ are same to the geometric triangle with sides $e^3, e^4, e^5$ respectively as shown in the FIGURE \ref{rt}. That is, area of the both the triangles $=6$ square unit, $\angle A=36.87^\circ, \angle B= 53.13^\circ$ and $\angle C= 90^\circ.$ Similarly, properties of the geometric right triangle having sides $h, p, b\in \mathbb{R}(G)$ will be same to the ordinary right triangle having sides $h'=\ln(h), p'=\ln(p)$ and $b'=\ln(b),$ respectively. Now
\begin{align*}
 \sing \theta &=p^{\frac{1}{\ln(h)}}=e^{\ln\left[{p^{\frac{1}{\ln(h)}}}\right]}\\
             &= e^{\frac{\ln(p)}{\ln(h)}}\\
						 &= e^{\frac{p'}{h'}}\\
						 &=e^{\sin\theta}.
\end{align*}
 Similarly, $\cosg \theta=e^{\cos\theta}, \tang\theta=e^{\tan\theta}$ etc. Also, 
\begin{align*}
\frac{\sing\theta}{\cosg\theta}G &= \frac{p}{h}\textrm{G}\odot \frac{h}{b}\textrm{G}\\
                                 &=\frac{p}{b}\textrm{G}\\
																 &=\tang\theta.
\end{align*}
\subsection{Geometric Trigonometric Identities}
We can easily verify that all the identities of ordinary trigonometry for acute angle are also valid for geometric trigonometry with respect to the geometric arithmetic system $(\oplus, \ominus, \odot, \oslash).$ For,
\begin{align*}
\sing^{2_G}A\oplus \cosg^{2_G}A&= e^{\sin A}\odot e^{\sin A}\oplus e^{\cos A}\odot e^{\cos A}\\
                               &=e^{\sin^2 A}\oplus e^{\cos^2 A}\\
															 &=e^{\sin^2 A}.e^{\cos^2 A}\\
															&=e^{\left(\sin^2 A +\cos^2 A\right)}=e^1=e.
\end{align*}
Thus,
\begin{align*}
&\sing A \odot \cscg A= e, &\sing^{2_G}A\oplus \cosg^{2_G}A=e\\
&\cos A \odot \sec A=e, & \tang^{2_G}A \oplus e = \secg^{2_G}A\\
&\tang A \odot \cotg A=e, &\cotg^{2_G} A \oplus e= \cscg^{2_G}A.
\end{align*}
Again
\begin{align*}
\sing(A + B)&= e^{\sin(A + B)}\\
                &= e^{(\sin A\cos B+ \cos A\sin B)}\\
								&= e^{\sin A\cos B}.e^{\cos A\sin B}\\
								&= e^{\sin A\cos B}\oplus e^{\cos A\sin B}\\
								&= e^{\sin A} \odot e^{\cos B} \oplus e^{\cos A}\odot e^{\sin B}\\
								&= \sing A \odot \cosg B \oplus \cosg A\odot \sing B.
								\intertext{Similarly,}
	\cosg(A + B)&= \cosg A \odot \cosg B \ominus \sing A\odot \sing B.							
\end{align*}
Here, a question may arise that why the ordinary sum of the angles $A + B$ is taken instead of the geometric sum $A \oplus B.$ Cause is that, measure of the angles is invariant under the both ordinary and geometric coordinate systems. Of course, for practical uses, geometric sum will be essential, specially for  $G$-derivative of trigonometric functions.

\subsection{$G$-Limit}
According to Grossman and Katz\cite{GrossmanKatz}, geometric limit of a positive valued function defined in a positive interval is same to the ordinary limit. Here, we define $G$-limit of a function with the help of geometric arithmetic as follows:

 A function $f,$ which is positive in a given positive interval, is said to tend to the limit $l>0$ as $x$ tends to $a\in \mathbb{R},$ if, corresponding to any arbitrarily chosen number $\epsilon > 1,$ however small(but greater than $1$), there exists a positive number $\delta > 1,$ such that 
\begin{equation*}
1< \left|f(x)\ominus l\right|^G< \epsilon
\end{equation*}
for all values of $x$ for which $1< \left|x\ominus a\right|^G< \delta.$ We write
\begin{equation*}
_G\!\lim_{x\to a} f(x)= l \textrm{~or~} f(x)\xrightarrow{G} l.
\end{equation*}
Here,
\begin{align*}
\left|x\ominus a\right|^G< \delta &\Rightarrow \left|\frac{x}{a}\right|^G< \delta\\
                                  & \Rightarrow \frac{1}{\delta}<\frac{x}{a}<\delta\\
																	& \Rightarrow \frac{a}{\delta}<x<a\delta.
\end{align*}
Similarly, $\left|f(x)\ominus l\right|^G< \epsilon \Rightarrow \frac{l}{\epsilon}<f(x)<l\epsilon.$

Thus, in ordinary sense, $f(x) \xrightarrow{G} l$ means that for any given positive real number $\epsilon >1,$ no matter however closer to $1, \exists$ a finite number $\delta>1$ such that $f(x)\in ]\frac{l}{\epsilon}, l\epsilon [$ for every $x \in ]\frac{a}{\delta}, a\delta[.$ It is to be noted that lengths of the open intervals $]\frac{a}{\delta}, a\delta[$ and $]\frac{l}{\epsilon}, l\epsilon [$ decreases as $\delta$ and $\epsilon$ respectively decreases to $1.$ Therefore, as $\epsilon $ decreases to $1,$ $f(x)$ becomes closer and closer to $l,$ as well as $x$ becomes closer and closer to $a$ as $\delta$ decreases to $1.$ Hence, $l$ is also the ordinary limit of $f(x).$ i.e. $f(x) \xrightarrow{G} l \Rightarrow f(x) \rightarrow l.$ In other words, we can say that $G$-limit and ordinary limit are same for bipositive functions whose functional values as well as arguments are positive in the given interval. Only difference is that in $G$-calculus we approach the limit geometrically, but in ordinary calculus we approach the limit linearly.

A function $f$ is said to tend to limit $l$ as $x$ tends to $a$ from the left, if for each $\epsilon>1$ (however small), there exists $\delta>1$ such that $|f(x)\ominus l|^G <\epsilon$ when $a/\delta < x < a.$ In symbols, we then write 
\begin{equation*}
_G\lim_{x\to a-}f(x) = l \text{~or~}  f (a-1) = l.
\end{equation*}
Similarly, a function $f$ is said to tend to limit $l$ as $x$ tends to $a$ from the right, if for each $\epsilon>1$(however small), there exists $\delta>1$ such that $|f(x)\ominus l|^G <\epsilon$ when $a < x < a\delta.$ In symbols, we then write 
\begin{equation*}
_G\lim_{x\to a+}f(x) = l \text{~or~}  f (a+1) = l.
\end{equation*} 

If $f(x)$ is negative valued in a given interval, it will be said to tend to a limit $l<0$ if for $\epsilon >1, \exists \delta >1$ such that $f(x) \in ]l\epsilon, \frac{l}{\epsilon}[$ whenever $x\in ]\frac{a}{\delta}, a\delta[.$

\subsection{$G$-Continuity}
 A function $f$ is said to be $G$-continuous at $x=a$ if
\begin{enumerate}
\item[(i)]{}$f(a)$ i.e., the value of $f(x)$ at $x=a,$ is a definite number,
\item[(ii)]{} the $G$-limit of the function $f(x)$ as $x\xrightarrow{G} a$ exists and is equal to $f(a).$
\end{enumerate}
Alternatively, a function $f$ is said to be $G$-continuous at $x=a,$ if for arbitrarily chosen $\epsilon>1,$ however small, there exists a number $\delta>1$ such that
\begin{equation*}
\left|f(x)\ominus f(a)\right|^G<\epsilon
\end{equation*} 
for all values of $x$ for which, $\left|x\ominus a\right|^G<\delta.$

On comparing the above definitions of limits and continuity, we can conclude that a function $f$ is $G$-continuous at $x=a$ if
\begin{equation*}
\lim_{x\to a}\frac{f(x)}{f(a)}=1.
\end{equation*} 
\section{Basic Properties of $G$-Calculus}
\subsection{$G$-Derivative and its Interpretation}
In \cite{KhirodBipan3} we defined the $G$-differentiation of $f(x)$ as
\begin{equation}\label{eq:51}
\frac{d^Gf}{dx}=f^G(x)=_G\lim_{h\rightarrow 1}\frac{f(x\oplus h)\ominus f(x)}{h}G \text{~for~} h\in \mathbb{R(G)}.
\end{equation}
Equivalently
\begin{align*}
\frac{d^Gf}{dx}&=_G\lim_{h\rightarrow 1}\frac{f(x\oplus h)\ominus f(x)}{h}G\\
               &=\lim_{h\rightarrow 1}\left[\frac{f(hx)}{f(x)}\right]^{\frac{1}{\ln h}}\\
								&=\lim_{u\rightarrow 0}\left[\frac{f(e^u.x)}{f(x)}\right]^{\frac{1}{u}}\text{~where~} h=e^u\in\mathbb{R(G)}	
\end{align*}
 Here, what ever we deduce, can be expressible in terms of geometric arithmetical system, though we express results in terms of classical arithmetic for easy comparison. So, the $G$-derivative of a positive valued function $f$ at a point $c$ belonging to a positive interval can be defined as
\begin{align*}
f^G(c)&=_G\lim_{x\to c}\frac{f(x)\ominus f(c)}{x\ominus c}G
\end{align*}
\begin{align}\label{eq:100}
\text{or~} f^G(c)&=\lim_{x\to c}\left[\frac{f(x)}{f(c)}\right]^{\frac{1}{\ln(\frac{x}{c})}}.
\end{align}  
Equation (\ref{eq:100}) is  the bigeometric slope define by Grossman in \cite{Grossman83}. 
Instead of the phase ``bigeometric calculus''  term ``$G$-calculus'' is used because, depending on Grossman \cite{Grossman83} and  Grossman and Katz's \cite{Grossman} pioneering works, we are trying to develop his work  with the help of geometric arithmetic system. 

From (\ref{eq:100}), it is clear that $G$-derivative exist if both $f(x)$ and $f(c)$ takes same sign and at the same time $x$ and $c$ takes same sign. 
  
We know that $x+ h$ is arithmetic change to $x.$ Here, $x\oplus h$ is geometric change to the independent variable $x.$ We are saying $x\oplus h$ is geometric change because $x, x\oplus h, x\oplus h^2, h\oplus h^3,....$ forms a geometric progression $x, xh, xh^2, xh^3,....$ just as $x, x+h, x+2h, x+3h,....$ forms an arithmetic progression. Now, as the independent variable changes from $x$ to $xh$(i.e. to $x\oplus h$), value of the function changes from $f(x)$ to $f(x\oplus h)=f(xh).$ Geometric change to $x$ is given by
\begin{align*}
\Delta x &= x\oplus h \ominus x= \frac{xh}{h}=h
\end{align*}
whereas geometric change to $y=f(x)$ is given by
\begin{align*}
\Delta y &= f(x\oplus h) \ominus f(x)= \frac{f(xh)}{f(x)}
\end{align*}
In case of ordinary derivative $\frac{\Delta y}{\Delta x}=\frac{f(x+h)-f(x)}{h}$ gives the average additive change in $f(x)$ per unit change in $x$ over the interval $[x, x+ \Delta x]= [x, x+ h].$ Here in $G$-calculus, 
\begin{equation*}
\frac{\Delta y}{\Delta x}G=(\Delta y)^{\frac{1}{\ln(\Delta x)}} =\left[\frac{f(xh)}{f(x)}\right]^{\frac{1}{\ln h}}
\end{equation*}
gives the average geometric change in $f(x)$ per unit geometric change in $x$ over the interval $[x, xh].$ Now, if we take the limit as $\Delta x(\text{i.e.~} h)$ tends to the geometric zero,$1,$ we get 
\begin{equation*}
\frac{d^Gy}{d^Gx}= _G\lim_{\Delta x \to 1} \frac{\Delta y}{\Delta x}G = _G\lim_{\Delta x \to 1} (\Delta y)^{\frac{1}{\ln(\Delta x)}}= \lim_{h \to 1}\left[\frac{f(xh)}{f(x)}\right]^{\frac{1}{\ln h}}.
\end{equation*}
Or, in short 
\begin{equation*}
f^G(x)=(d^Gy)^{\frac{1}{\ln(d^Gx)}}= \lim_{h \to 1}\left[\frac{f(xh)}{f(x)}\right]^{\frac{1}{\ln h}}.
\end{equation*}
It is to be noted that G-derivative exists if $f(x)\ne 0$ and $f(x), f(hx)$ are both positive or both negative. Also, it is obvious that $\frac{d^Gy}{d^Gx}= \frac{dy}{dx}G={dy}^{\frac{1}{\ln(dx)}}.$

It is obvious that $y=m\odot x \oplus c$ i.e. $y=c.x^{\ln m}$ represents a straight line with slope $m$ in geometric co-ordinate system as well as in log-log paper. Then, $\frac{dy^G}{dx^G}=m.$ i.e. G-derivative is the slope of the geometric straight line.
 
\textit{Note:} For the convenience, we use the symbol $f^{[n]}$ to denote $n^{\textrm{th}}$ geometric derivative $f^{(n_G)}.$
For example, the second geometric derivative of $f(x)$ is given by
\begin{align*}
\frac{d^{2_G}f}{dx^{2_G}}=f^{[2]}(x)&= _G\lim_{h\rightarrow 1}\frac{f^G(x\oplus h)\ominus f^G(x)}{h}G.
\end{align*}
Alternatively, $G$-derivative at point $x=c$ can be written as
\begin{equation*}
f^G(x)= \lim_{x \to c}\left[\frac{f(x)}{f(c)}\right]^{\frac{1}{\ln (\frac{x}{c})}}.
\end{equation*}
\begin{align*}
\intertext{We call that left hand $G$-derivative and right hand $G$-derivative exist at $x=c$ if}
\lim_{x\rightarrow c-}\left(\frac{f(c.h)}{f(c)}\right)^{\frac{1}{\ln (\frac{x}{c})}} \text{~and~} \lim_{x\rightarrow c+}\left(\frac{f(c.h)}{f(c)}\right)^{\frac{1}{\ln (\frac{x}{c})}}.
\end{align*}
exist, respectively. 
\begin{thm}
If a function $f$ is $G$-differentiable and is positive, then it is both $G$-continuous and ordinarily continuous.  
\end{thm}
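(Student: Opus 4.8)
The plan is to obtain both conclusions from a single algebraic identity, leaning only on facts already recorded: that (as noted in the subsection on $G$-continuity) $f$ is $G$-continuous at a point $c$ precisely when $\lim_{x\to c} f(x)/f(c)=1$, and that (as noted in the subsection on $G$-limits) for a bipositive function a $G$-limit equal to $l$ is automatically the ordinary limit $l$. Consequently it suffices to prove that $\lim_{x\to c} f(x)/f(c)=1$ at every point $c$ where $f$ is $G$-differentiable; ordinary continuity is then immediate.

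So I would fix such a $c$, set $L:=f^G(c)$, and note that, by $G$-differentiability and \eqref{eq:100}, $L$ is a well-defined element of $\mathbb{R}(G)$, i.e.\ a finite positive real, with
\[
\lim_{x\to c}\Big[\tfrac{f(x)}{f(c)}\Big]^{1/\ln(x/c)}=L,\qquad 0<L<\infty.
\]
Write $A(x):=\big[f(x)/f(c)\big]^{1/\ln(x/c)}$, which is a positive real for $x$ in a deleted neighbourhood of $c$ (there $x\neq c$, so $\ln(x/c)\neq 0$, and $f>0$). On that neighbourhood the identity
\[
\frac{f(x)}{f(c)}=A(x)^{\ln(x/c)}=\exp\!\big(\ln(x/c)\cdot\ln A(x)\big)
\]
holds. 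As $x\to c$ we have $A(x)\to L>0$, hence $\ln A(x)\to\ln L$ and in particular $\ln A(x)$ stays bounded, while $\ln(x/c)\to 0$; therefore the exponent tends to $0$ and $f(x)/f(c)\to e^{0}=1$. By the $G$-continuity criterion this says $f$ is $G$-continuous at $c$ (the one-sided versions being subsumed, since the limit just computed is two-sided); and since $f(c)$ is a fixed finite nonzero number, $f(x)/f(c)\to 1$ forces $f(x)\to f(c)$, i.e.\ ordinary continuity — equivalently one may quote the ``$G$-limit $\Rightarrow$ ordinary limit'' observation directly.

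The one place that genuinely requires attention is the limit step in the last display: it succeeds only because $L=f^G(c)$ is a true finite positive number, so that $L^{\ln(x/c)}\to 1$ from both sides. Were the definition of $G$-derivative loosened to allow the value $0$ or $+\infty$, the two one-sided limits of $A(x)^{\ln(x/c)}$ would be $0$ and $+\infty$ and the statement would fail; so the essential thing to confirm is simply that ``$G$-differentiable'' here means the finite limit \eqref{eq:100} exists in $\mathbb{R}(G)$, which is how it was defined. Everything else — rewriting a power as $\exp\circ(\,\cdot\,)$ and using continuity of $\exp$ at $0$ — is routine. A direct $\epsilon$–$\delta$ argument transcribing the definitions of $G$-limit, $G$-continuity and $G$-derivative would also work, but it is longer and makes the role of the hypothesis less transparent.
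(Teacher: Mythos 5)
Your proposal is correct and follows essentially the same route as the paper: write $\frac{f(x)}{f(c)}=\left[\left(\frac{f(x)}{f(c)}\right)^{1/\ln(x/c)}\right]^{\ln(x/c)}$, note the base tends to the finite positive number $f^G(c)$ while the exponent tends to $0$, conclude $\lim_{x\to c} f(x)/f(c)=1$ (i.e.\ $G$-continuity), and then use $f(c)\neq 0$ to get ordinary continuity. Your $\exp$--$\log$ rewriting merely makes rigorous the paper's slightly informal intermediate line $\left[f^G(c)\right]^{\ln(x/c)}\to\left[f^G(c)\right]^{0}$, so it is the same argument, just stated more carefully.
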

\begin{proof}
Let $f^G(x)$ exists, where
\begin{align*}
f^G(x)&= \lim_{x \to c}\left[\frac{f(x)}{f(c)}\right]^{\frac{1}{\ln (\frac{x}{c})}}.
\intertext{Now,}
\lim_{x\to c}\frac{f(x)}{f(c)}&=\lim_{x\to c}\left[\left(\frac{f(x)}{f(c)}\right)^{\frac{1}{\ln(\frac{x}{c})}}\right]^{\ln(\frac{x}{c})}\\
                              &=\left[f^G(c)\right]^{\ln(\frac{x}{c})}\\
															&=\left[f^G(c)\right]^0\\
															&=1.
\end{align*}
Hence, $G$-derivative $\Rightarrow$ $G$-Continuous. Now, for $f(c)\ne 0,$ we have
\begin{align*}
\lim_{x\to c}\frac{f(x)}{f(c)}=1 &\Rightarrow \lim_{x\to c}\frac{f(x)}{f(c)} -1=0\\
                                 &\Rightarrow \lim_{x\to c}\frac{f(x) - f(c)}{f(c)}=0\\
																&\Rightarrow \lim_{x\to c}f(x)-f(c)=0, \text{~since~} f(c)\ne 0\\
																&\Rightarrow \lim_{x\to c}f(x)=f(c).
\end{align*}
Thus, if $G$-limit of $f(x)$ exists at $x=c,$ then its ordinary limit exists and is equal to $f(c).$
\end{proof}
\begin{prop}
A continuous function $f$ is not necessarily $G$-derivable.
\end{prop}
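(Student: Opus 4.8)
The plan is to settle this with an explicit counterexample: a function that is ordinarily continuous (indeed $G$-continuous) on a positive interval but whose $G$-derivative fails to exist at an interior point. The guiding principle is the $\ln$--$\exp$ conjugation underlying all of $G$-calculus. Starting from $f^G(c)=\lim_{x\to c}\left[f(x)/f(c)\right]^{1/\ln(x/c)}$ and substituting $c=e^{u}$, $x=e^{u+t}$, one finds $\left[f(x)/f(c)\right]^{1/\ln(x/c)}=\exp\!\left(\frac{g(u+t)-g(u)}{t}\right)$ where $g(u):=\ln f(e^{u})$; hence $f$ is $G$-differentiable at $e^{u}$ exactly when $g$ is ordinarily differentiable at $u$, while $f$ is ordinarily continuous at $e^{u}$ exactly when $g$ is continuous at $u$. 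So any continuous, non-differentiable $g$ yields a continuous, non-$G$-differentiable $f$. I would take the simplest such $g$, namely $g(u)=|u|$, which corresponds to
$$f(x)=e^{|\ln x|}=\max\{x,\,1/x\},\qquad x\in\mathbb{R}(G),$$
and test $G$-differentiability at the point $c=1$, where $f(1)=1$.

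First I would verify that $f$ is continuous, and even $G$-continuous, at $x=1$. Since $\ln$, $|\cdot|$ and $\exp$ are continuous, $\lim_{x\to1}f(x)=e^{0}=1=f(1)$; consequently $\lim_{x\to1}f(x)/f(1)=1$, which is exactly the $G$-continuity criterion recorded above. Continuity at every other point of $\mathbb{R}(G)$ is clear for the same reason, so $f$ is in fact continuous throughout.

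Next I would compute the one-sided $G$-derivatives at $c=1$ directly from the definition. For $x>1$ we have $\ln x>0$, so $\left[f(x)/f(1)\right]^{1/\ln x}=\left(e^{|\ln x|}\right)^{1/\ln x}=e^{|\ln x|/\ln x}=e$; for $0<x<1$ we have $\ln x<0$ and $|\ln x|=-\ln x$, so the same quantity equals $e^{-1}$. Therefore the right-hand $G$-derivative at $1$ is $e$ and the left-hand $G$-derivative is $e^{-1}$. As $e\neq e^{-1}$, the $G$-limit defining $f^G(1)$ does not exist, so $f$ is not $G$-derivable at $1$ although it is continuous there. This proves the proposition; I would also remark that, exactly as in classical analysis, substituting a continuous nowhere-differentiable function (e.g.\ a Weierstrass-type function) for $g$ produces a continuous $f$ that is nowhere $G$-differentiable.

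The hard part is essentially non-existent here: the one point needing care is admissibility, i.e.\ ensuring the counterexample lies in the domain where a $G$-derivative could reasonably be expected --- $f$ must be positive on a positive interval, with $x$ and $c$ of the same sign --- so that the failure is genuinely a failure of $G$-differentiability rather than an artifact of the function leaving the domain of the $G$-derivative operator. The conjugation identity makes the whole verification transparent, but since a single bad point suffices it is quickest to evaluate the difference quotient at $c=1$ by hand, which also sidesteps having to justify that identity in full generality.
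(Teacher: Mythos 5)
Your proof is correct and uses essentially the same counterexample as the paper: your $f(x)=e^{|\ln x|}=\max\{x,1/x\}$ is precisely the geometric absolute value $|x|^G$ used there, and both arguments conclude by showing the one-sided $G$-derivatives at $x=1$ are $e$ and $e^{-1}$ (the paper even has its ``left'' and ``right'' labels swapped, while yours are assigned correctly). The conjugation remark $g(u)=\ln f(e^u)$ is a nice addition but not a different route.
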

\begin{proof}
Let us consider the function
\begin{equation*}
f(x)=\left|x\right|^G=
\begin{cases}
x, &\text{if $x>1$}\\
1,&\text{if $x=1$}\\
\frac{1}{x},&\text{if $0<x<1$}.
\end{cases}
\end{equation*}
Then, obviously it is continuous at $x=1.$ But, we show that it is not $G$-differentiable at $x=1.$
\begin{align*}
\intertext{Left hand $G$-derivative is given by}
\lim_{h\rightarrow 1+}\left(\frac{f(1.h)}{f(1)}\right)^{\frac{1}{\ln h}}&= \lim_{h\rightarrow 1+}\left(\frac{\left|h\right|^G}{1}\right)^{\frac{1}{\ln h}}\\
&= \lim_{h\rightarrow 1+}h^{\frac{1}{\ln h}}=e.
\intertext{Right hand $G$-derivative is given by}
\lim_{h\rightarrow 1-}\left(\frac{f(1.h)}{f(1)}\right)^{\frac{1}{\ln h}}&= \lim_{h\rightarrow 1-}\left(\frac{\left|h\right|^G}{1}\right)^{\frac{1}{\ln h}}\\
&= \lim_{h\rightarrow 1+}\left(\frac{1}{h}\right)^{\frac{1}{\ln h}}=\frac{1}{e}.
\end{align*}
Thus $Lf^G(1)\ne Rf^G(1).$ 
\end{proof}
\begin{example}
If $f(x)=x^{n_G},$ then $ f^G(x)= e^n\odot x^{(n-1)_G}$ and $f^{(n_G)}=e^{n!}.$
\end{example}
\begin{example}
Let $f(x)=x.$ Then $\frac{d^Gf}{dx^G}=e.$
\end{example}
\begin{example}
If $f(x)=e^x,$ then $f^G(x)=e^x.$ 
\end{example}
\begin{proof}
\begin{align*}
f^G(x)&=_G\lim_{h \to 1}\frac{e^{x\oplus h}\ominus e^x}{h}G\\
      &= \lim_{h \to 1}\frac{e^{xh-x}}{h}G\\
			&= \lim_{h \to 1}\left[e^{xh-x}\right]^{\frac{1}{\ln h}}\\
			&= \lim_{h \to 1}e^{x\frac{(h-1)}{\ln h}}\\
			&= e^x, \text{~since by ordinary L' Hospital rule,~} \lim_{h \to 1}\frac{(h-1)}{\ln h}=1.
\end{align*}
\end{proof}
\begin{example}
$G$-derivative of $f(x)=x^n$ is constant, where $n$ is a positive integer.
\end{example}
\begin{proof}
\begin{align*}
f^G(x)&=_G\lim_{h \to 1}\frac{(x\oplus h)^n\ominus x^n}{h}G\\
      &=\lim_{h \to 1}\left[\frac{x^nh^n}{x^n}\right]^{\frac{1}{\ln h}}\\
			&=\lim_{h \to 1}\left(h^{\frac{1}{\ln h}}\right)^n\\
			&=e^n, \text{~since~} h^{\frac{1}{\ln h}}=e\\
			&=\text{a constant}.
\end{align*}
\end{proof}
\begin{example}
If $f(x)=\sin x,$ then $f^G(x)=e^{x\cot x}.$
\end{example}
\begin{proof}
\begin{align*}
f^G(x)&=_G\!\lim_{h \to 1}\frac{\sin(x\oplus h)\ominus \sin x}{h}G\\
      &=\lim_{h \to 1}\left[\frac{\sin(xh)}{\sin x}\right]^{\frac{1}{\ln h}} \text{\qquad ($1^\infty$ form)}\\
			&=\lim_{h \to 1}e^{\ln\left[\frac{\sin(xh)}{\sin x}\right]^{\frac{1}{\ln h}}} \\
			&=e^{\lim_{h \to 1}\left[\frac{\ln(\sin(xh)) -\ln(\sin x)}{\ln h}\right]} \text{\qquad ($\frac{0}{0}$ form)}\\
			&=e^{\lim_{h \to 1}\left[\frac{h.x.\cos(xh)}{\sin(xh)}\right]}\text{~(differentiating numerator and denominator w.r.t. $h$)}\\
			&=e^{x\cot x}.
\end{align*}
\end{proof} 
\begin{rem}
Though $f(x)=x^n$ is a polynomial of degree $n$ in ordinary sense, but geometrically it is a polynomial of degree one as $x^n= e^n\odot x.$ So, its $G$-derivative is constant for any positive integer $n.$
\end{rem}
\subsection{Relation between $G$-derivative and ordinary derivative} By definition, $G$-derivative of a positive valued function $f(x)$ is given by
\begin{align*}
f^G(x)&=_G\!\lim_{h\rightarrow 1}\frac{f(x\oplus h)\ominus f(x)}{h}G\\
      &=\lim_{h\rightarrow 1}\left[\frac{f(hx)}{f(x)}\right]^{\frac{1}{\ln h}},\text{~ which is in $1^\infty$ indeterminate form}.
\end{align*}
Using logarithm, to transform it to $\frac{0}{0}$ indeterminate form and then applying L' Hospital rule, we can make a relation between $G$-derivative and ordinary derivative as follows:
\begin{align*}
f^G(x)&=\lim_{h\rightarrow 1}e^{\ln\left[\frac{f(hx)}{f(x)}\right]^{\frac{1}{\ln h}}}\\
      &=\lim_{h\rightarrow 1}e^{\frac{\ln{f(hx)}-\ln{f(x)}}{\ln h}}\\
			&=e^{\lim_{h\rightarrow 1}\left[\frac{\ln{f(hx)}-\ln{f(x)}}{\ln h}\right]}, \textrm{~ (since the exponential function is continuous)}\\
			&=e^{\lim_{h\rightarrow 1}\left[\frac{{\frac{d}{dh}f(hx)}}{f(hx)}/\frac{1}{h}\right]}, \textrm{~ (applying L' Hospital rule)}\\
			&=e^{\lim_{h\rightarrow 1}\frac{hxf'(hx)}{f(hx)}}, \text{~(since $\frac{d}{dh}f(hx)=\frac{d}{dx}f(hx)$)}\\
			&=e^{\frac{xf'(x)}{f(x)}}.
\end{align*}
Thus,
\begin{equation}\label{eq:52}
f^G(x)= e^{x\frac{f'(x)}{f(x)}}.
\end{equation}
Instead of using the definition of G-derivative, often we'll use the relation (\ref{eq:52}).

\subsection{$G$-derivatives of some standard functions}
\begin{itemize}
\item{\textbf{$G$-derivative of a constant:}} If $f(x)=c,$ then
\begin{align*}
\frac{d^G}{dx^G}\left(f(x)\right)&=e^{x\frac{f'(x)}{f(x)}}=e^{0}=1
\end{align*}
\item{\textbf{$G$-derivative of ordinary product of a constant and a function:}}
\begin{align*}
\frac{d^G}{dx^G}\left(cf(x)\right)&=e^{x\frac {cf'(x)}{cf(x)}}=e^{x\frac {f'(x)}{f(x)}}=\frac{d^G}{dx^G}\left(f(x)\right).
\end{align*}
\item{\textbf{$G$-derivative of ordinary product of two functions:}}
\begin{equation*}
\frac{d^G}{dx^G}\left(f(x).g(x)\right)=e^{x\frac {f(x).g'(x)+f'(x).g(x)}{f(x).g(x)}}=e^{x\frac {f'(x)}{f(x)}}.e^{x\frac {g'(x)}{g(x)}}=\frac{d^G}{dx^G}\left(f(x)\right).\frac{d^G}{dx^G}\left(g(x)\right).
\end{equation*}
\begin{align}\label{eq:53}
\textrm{or,}\qquad \frac{d^G}{dx^G}\left(f(x)\oplus g(x)\right)=\frac{d^G}{dx^G}\left(f(x)\right)\oplus\frac{d^G}{dx^G}\left(g(x)\right).
\end{align}
\item{\textbf{$G$-derivative of quotient of two functions:}}
\begin{align*}
\frac{d^G}{dx^G}\left(\frac{f(x)}{g(x)}\right)&=e^{x.\frac{g(x)}{f(x)}.\frac {g(x).f'(x)-f(x).g'(x)}{g^2(x)}}\\
                &=e^{x.\frac {g(x).f'(x)-f(x).g'(x)}{f(x).g(x)}}\\
                &=\frac{e^{x\frac {f'(x)}{f(x)}}}{e^{x\frac {g'(x)}{g(x)}}}=\frac{\frac{d^G}{dx^G}\left(f(x)\right)}{\frac{d^G}{dx^G}\left(g(x)\right)}.
\end{align*}
\begin{align}\label{eq:54}
\textrm{or,}\qquad \frac{d^G}{dx^G}\left(f(x)\ominus g(x)\right)&=\frac{d^G}{dx^G}\left(f(x)\right)\ominus\gd\left(g(x)\right).
\end{align}
\item{\textbf{$G$-derivative of trigonometric functions:}}
\begin{alignat*}{2}
&\frac{d^G}{dx^G}(\sin x)= e^{x\cot x},         &\frac{d^G}{dx^G}(\cot x)= e^{-x\sec x \csc x}\\
&\frac{d^G}{dx^G}(\cos x)= e^{-x\tan x},       &\frac{d^G}{dx^G}(\sec x)= e^{x\tan x}\\
&\frac{d^G}{dx^G}(\tan x)= e^{x\sec x \csc x}, &\frac{d^G}{dx^G}(\csc x)= e^{-x\cot x}.
\end{alignat*}
\item{\textbf{$G$-derivative of sum and product functions:}}
\begin{align*}
&1. \quad \left(u\odot v\right)^G =u^G\odot v \oplus u\odot v^G.
\intertext{In ordinary sense,}
&\qquad \left(u^{\ln v}\right)^G=\left(u^G\right)^{\ln v}.\left(v^G\right)^{\ln u}.\\
&2. \quad \left(e^u\right)^G=e^{xf'}.\\
&3. \quad \left(f+g\right)^G=e^{\frac{x(f'+g')}{f+g}}=\left(f^G\right)^{\frac{f}{f+g}}.\left(g^G\right)^{\frac{g}{f+g}}.\\
&4. \quad \frac{dy^G}{dx^G}\left(f\circ g\right)(x)=e^{\frac{x.f'\left[g(x)\right].g'(x)}{f\left[g(x)\right]}}.
\end{align*}
\end{itemize}
\begin{rem}
 The function $f(x)=e^x$ remains unchanged under both ordinary derivative and $G$-derivative. It is observed that ordinary derivatives of the ordinary Taylor's expansion of $f(x)=e^x$ is invariant as
\begin{align*}
f(x)&=e^x= 1 + x+ \frac{x^2}{2!}+\frac{x^3}{3!}+\frac{x^4}{4!}+...=f'(x)=f''(x)=...
\end{align*}
Similar to ordinary derivative, in \cite{KhirodBipan3} we have proved that the $n^{\textrm{th}}$ $G$-derivative of a geometric polynomial of degree $n$ is constant. Since $e^x$ remains unchanged under any number of $G$-derivative, it must have infinite geometric polynomial expansion which will remain unchanged under geometric differentiations. Next, we try to give geometric Taylor's expansions of different functions. 
\end{rem}
\begin{thm}
If $f:(a, b): \longrightarrow \mathbb{R}(G)$ is $G$-differentiable, then
\begin{enumerate}
\item[(i)]{$f$ is increasing, if $f^G\ge1.$}
\item[(ii)]{$f$ is decreasing, if $f^G\le1.$}
\end{enumerate}
\end{thm}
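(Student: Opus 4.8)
The plan is to reduce both statements to the classical monotonicity criterion by means of the bridge formula (\ref{eq:52}), i.e. $f^G(x)=e^{x f'(x)/f(x)}$, which is available here because $f$ is assumed $G$-differentiable and positive. First I would record the set-up: since $f$ maps into $\mathbb{R}(G)=\mathbb{R}^+\setminus\{0\}$ and $(a,b)$ is a positive interval, one has $x>0$ and $f(x)>0$ throughout; moreover, $f$ is ordinarily differentiable wherever $f^G$ exists, and the correspondence is reversible, $f'(x)=\frac{f(x)}{x}\ln f^G(x)$. This is exactly what the computation preceding (\ref{eq:52}) delivers.

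For part (i): the hypothesis $f^G(x)\ge 1$ — geometric non-negativity, $1$ being the geometric zero — is, by (\ref{eq:52}) together with the strict monotonicity of $\exp$, equivalent to $x f'(x)/f(x)\ge 0$ on all of $(a,b)$. Because $x>0$ and $f(x)>0$, this forces $f'(x)\ge 0$ on $(a,b)$. By the ordinary mean value theorem (equivalently, the standard fact that a function with non-negative derivative on an interval is non-decreasing there), for any $x_1<x_2$ in $(a,b)$ we obtain $f(x_1)\le f(x_2)$; since the geometric order coincides with the ordinary order on $\mathbb{R}^+$, this says precisely that $f$ is increasing in the geometric sense as well. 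Part (ii) is the mirror image: $f^G(x)\le 1$ gives $x f'(x)/f(x)\le 0$, hence $f'(x)\le 0$ on $(a,b)$, hence $f$ is decreasing.

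The only point that genuinely requires care — and hence the main obstacle — is the justification that $G$-differentiability of the positive-valued $f$ really does entail ordinary differentiability with the clean identity (\ref{eq:52}), rather than merely the reverse implication; the $1^\infty$-to-$0/0$ manipulation with L'Hôpital sketched before (\ref{eq:52}) should be read as asserting that existence of the limit defining $f^G(x)$ yields existence of the limit defining $x f'(x)/f(x)$ (and so of $f'(x)$, as $f(x)\ne 0$). Once this equivalence is granted, everything else is immediate from ordinary calculus, so I would keep the write-up short and simply invoke (\ref{eq:52}).
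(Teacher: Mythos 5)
Your argument is correct in substance but follows a genuinely different route from the paper. The paper works directly with the $\epsilon$--$\delta$ definition of the $G$-derivative: at a point $c$ with $f^G(c)>1$ it picks $1<\epsilon<f^G(c)$, so that $\left[f(x)/f(c)\right]^{1/\ln(x/c)}>1$ for $x$ near $c$, and then reads off $f(x)>f(c)$ on $]c,c\delta[$ and $f(x)<f(c)$ on $]c/\delta,c[$, i.e.\ local increase at $c$ (symmetrically for $f^G(c)<1$); it never leaves geometric arithmetic and never invokes $f'$. Your reduction through (\ref{eq:52}) and the mean value theorem instead converts $f^G\ge 1$ into $f'\ge 0$ on $(a,b)$ and delivers a global, non-strict conclusion, which in fact fits the statement (hypothesis $f^G\ge 1$, allowing points where $f^G=1$) more cleanly than the paper's local strict-sign argument. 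The one point needing care is exactly the one you flagged: the L'H\^opital computation preceding (\ref{eq:52}) presupposes that $f'$ exists, so as written it gives the implication in the wrong direction and your appeal to it is slightly circular. The clean fix is the substitution $g(t)=\ln f(e^t)$: existence of $f^G(x)$ is precisely existence of $g'(\ln x)$, because $\ln f^G(x)=\lim_{s\to 0}\frac{g(\ln x+s)-g(\ln x)}{s}$, and then $f(x)=e^{g(\ln x)}$ is ordinarily differentiable by the chain rule with $x f'(x)/f(x)=g'(\ln x)$, which yields (\ref{eq:52}) and completes your argument; note the paper itself asserts this implication (without proof) in its proofs of Darboux's and Rolle's theorems, so invoking it is consistent with its conventions.
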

\begin{proof}
Let $c$ be an interior point of the domain $[a, b]$ of a function $f$ and $f^G(c)$ exists and be positive, i.e. $f^G(c)>1.$
By definition of $G$-derivative, \[\lim_{x\to c}\left[\frac{f(x)}{f(c)}\right]^{\frac{1}{\ln(x/c)}}=f^G(c), x\ne c.\]
i.e. $f^G(c)$ is the limit of $\left[\frac{f(x)}{f(c)}\right]^{\frac{1}{\ln(x/c)}}.$ Then for given $\epsilon >1, \exists \delta >1$ such that
\begin{align*}
&\left|\left[\frac{f(x)}{f(c)}\right]^{\frac{1}{\ln(x/c)}}\ominus f^G(c)\right|^G<\epsilon, \text{~where~} \left|x\ominus c\right|^G<\delta, x\ne c\\
\Rightarrow ~& \frac{f^G(c)}{\epsilon}<\left[\frac{f(x)}{f(c)}\right]^{\frac{1}{\ln(x/c)}}<\epsilon.f^G(c).
\end{align*}
If $\epsilon >1$ is so selected that $\epsilon < f^G(c),$ then $\left[\frac{f(x)}{f(c)}\right]^{\frac{1}{\ln(x/c)}}> \frac{f^G(c)}{\epsilon} > 1,$ where $x\in ]c/\delta, c\delta[.$ Then
\begin{enumerate}
\item[(i)] {$\frac{f(x)}{f(c)} >1,$ i.e. $f(x)>f(c)$  if $x\in ]c, c\delta[$},
\item[(ii)] {$\frac{f(x)}{f(c)}<1,$ i.e. $f(x)<f(c)$  if $x \in ]c/\delta, c[$}.
\end{enumerate}Thus from (i) and (ii) $f(x)$ is increasing at $x=c.$ Hence the function is increasing at $x=c$ if $f^G(c)>1.$ Similarly, it can be proved that the function is decreasing at $x=c$ if $f^G(c)<1.$ 
\end{proof}
\begin{thm}[Darboux's Theorem] If a function $f$ is $G$-derivable on a closed interval $[a, b]$ and $f^G(a), f^G(b)$ are of opposite signs (i.e. one is $>1,$ other is $<1$) then there exists at least one point $c$ between $a$ and $b$ such that $f^G(c)=0.$
\end{thm}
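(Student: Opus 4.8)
The plan is to transcribe the classical proof of the special case of Darboux's theorem (the intermediate value of the derivative at the level $0$) into $G$-calculus, using as black boxes the three facts already available in the excerpt: $G$-differentiability implies $G$-continuity and hence ordinary continuity; the ordinary extreme value theorem; and the local monotonicity criterion established in the proof of the Theorem stating $f^G\ge 1\Rightarrow f$ increasing and $f^G\le 1\Rightarrow f$ decreasing. Throughout I take $f$ to be positive on $[a,b]$ and $a,b>1$, as is tacitly assumed everywhere in the paper, so that the quotients $f(x)/f(c)$ and the logarithms $\ln(x/c)$ below are meaningful; I also read the conclusion ``$f^G(c)=0$'' as $f^G(c)=1$, i.e. $f^G(c)$ equals the geometric zero, in accordance with the Remark that fixes $1$ as the geometric zero.

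First I would use that a $G$-derivable function is ordinarily continuous, so on the compact interval $[a,b]$ it attains both a maximum and a minimum. Assume to begin with that $f^G(a)>1$ and $f^G(b)<1$. By the local form of the monotonicity theorem, $f^G(a)>1$ forces $f$ to be increasing at $a$: there is $x\in\,]a,a\delta[\,$ with $f(x)>f(a)$, so the maximum of $f$ on $[a,b]$ is not attained at the endpoint $a$. Symmetrically $f^G(b)<1$ forces $f$ to be decreasing at $b$, so there is $x\in\,]b/\delta,b[\,$ with $f(x)>f(b)$, and the maximum is not attained at $b$ either. Hence $f$ attains its maximum at some interior point $c\in(a,b)$.

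The crux is then a geometric Fermat lemma at $c$. For $x$ near $c$ we have $f(x)\le f(c)$, hence $0<f(x)/f(c)\le 1$. If $x<c$ then $\ln(x/c)<0$, so a number in $(0,1]$ raised to the power $1/\ln(x/c)<0$ is $\ge 1$, and letting $x\to c^{-}$ gives $\lim_{x\to c^{-}}\bigl[f(x)/f(c)\bigr]^{1/\ln(x/c)}\ge 1$. If $x>c$ then $\ln(x/c)>0$ and the same base raised to the positive power $1/\ln(x/c)$ is $\le 1$, so $\lim_{x\to c^{+}}\bigl[f(x)/f(c)\bigr]^{1/\ln(x/c)}\le 1$. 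Since $f$ is $G$-derivable at $c$, these one-sided $G$-derivatives both equal $f^G(c)$, which is thereby squeezed to $f^G(c)=1$. In the remaining case $f^G(a)<1$ and $f^G(b)>1$ one runs the dual argument: $f$ is decreasing at $a$ and increasing at $b$, so its minimum is interior, and at an interior minimum $f(x)/f(c)\ge 1$, which reverses the two inequalities and again yields $f^G(c)=1$.

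I expect the only real obstacle to be the geometric Fermat step: one must keep precise track of how exponentiation by $1/\ln(x/c)$ reverses the inequality $f(x)/f(c)\le 1$ according to the sign of $\ln(x/c)$, and one must invoke $G$-derivability at $c$ exactly to conclude that the left and right one-sided $G$-derivatives exist and coincide. It is also worth recording the positivity hypotheses explicitly, since without them neither the quotients nor the exponents are defined. (Alternatively, once one observes that $G$-differentiability of $f$ entails ordinary differentiability, the relation $f^G(x)=e^{xf'(x)/f(x)}$ of \eqref{eq:52} converts the hypothesis into $f'(a)>0>f'(b)$ and the statement into the classical Darboux theorem applied to $f'$; the argument above is, however, self-contained within $G$-calculus.)
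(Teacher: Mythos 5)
Your proposal is correct, but it takes a genuinely different route from the paper's. The paper's own proof is exactly the argument you relegate to your closing parenthesis: it uses the relation $f^G(x)=e^{xf'(x)/f(x)}$ of (\ref{eq:52}) to translate $f^G(a)<1$ and $f^G(b)>1$ into $f'(a)<0$ and $f'(b)>0$, invokes the classical Darboux theorem for the ordinary derivative to obtain $c$ with $f'(c)=0$, and translates back via $f^G(c)=e^{cf'(c)/f(c)}=1$; this is short but entirely parasitic on the Newtonian result (and it tacitly needs $f'$ to exist on all of $[a,b]$, which does follow from $G$-derivability through the same relation, together with $a,b>0$ and $f>0$). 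Your argument instead stays inside $G$-calculus: ordinary continuity from $G$-derivability plus the extreme value theorem give an extremum on $[a,b]$; the local two-sided inequalities established in the proof of the increasing/decreasing theorem push the extremum off the endpoints (at $a$ and $b$ you only need the one-sided halves of that criterion, which come directly from the limit defining $f^G$ there); and your geometric Fermat lemma, tracking how exponentiation by $1/\ln(x/c)$ reverses $f(x)/f(c)\le 1$ according to the sign of $\ln(x/c)$, squeezes the two one-sided $G$-derivatives and hence $f^G(c)$ to $1$. What your version buys is self-containedness within $G$-calculus and explicitness about the positivity hypotheses and about reading the stated conclusion ``$f^G(c)=0$'' as the geometric zero $1$ (which is also what the paper in fact proves); what the paper's version buys is brevity, at the cost of importing the classical theorem it is meant to mirror. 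Both arguments are valid, and your handling of the two sign configurations (maximum versus minimum) is the right way to cover the hypothesis ``opposite signs'' in full.
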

\begin{proof}
Let $f^G(a)<1$ and $f^G(b)>1.$ Since, $G$-derivative exists $\Rightarrow $ ordinary derivative exists, so, $f'(a)$ and $f'(b)$ exist. Now
\begin{align*}
f^G(a)<1 &\Rightarrow e^{a\frac{f'(a)}{f(a)}}<1\\
         &\Rightarrow a\frac{f'(a)}{f(a)}< 0\\
				&\Rightarrow f'(a)<0.
\end{align*}
Similarly,
\begin{align*}
f^G(b)>1 &\Rightarrow f'(b)>0.
\end{align*}
Therefore from Newtonian calculus, there exists $c\in [a, b]$ s.t. $f'(c)=0$ and so $f^G(c)= e^{c\frac{f'(c)}{f(c)}}=1.$
\end{proof}
\begin{thm}[Intermediate value theorem for derivatives]
If a function $f$ is $G$-derivable on a closed interval $[a, b]$ and $f^G(a)\ne f^G(b)$ and $k$ be a number lying btween $f^G(a)$ and $f^G(b),$ then $\exists$ at least one point $c\in ]a, b[$ such that $f^G(c)=k.$
\end{thm}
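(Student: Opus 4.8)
The plan is to imitate the classical proof that derivatives have the intermediate value property, reducing the statement to the $G$-analogue of Darboux's Theorem established just above. Suppose, without loss of generality, that $f^G(a)<k<f^G(b)$; the reverse case follows by the same argument with the inequalities flipped. Note first that $k$ is automatically a geometric real number: since $f^G(x)=e^{xf'(x)/f(x)}$ is positive for every $x$, the number $k$ lies strictly between two positive numbers and hence $k>0$, so $k\odot x$ is well defined and positive for $x>0$.

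First I would introduce the auxiliary function
\[
\phi(x)=f(x)\ominus(k\odot x),
\]
which in ordinary notation is $\phi(x)=f(x)/x^{\ln k}$. Because $f$ is positive on $[a,b]$ and $x>0$ there, $\phi$ is a positive function on $[a,b]$; and because $f$ is $G$-differentiable while the geometric straight line $x\mapsto k\odot x$ (of the form $m\odot x\oplus c$ with $m=k$, $c=1$) is $G$-differentiable with $G$-derivative the constant $k$, the quotient rule $(\ref{eq:54})$ shows that $\phi$ is $G$-differentiable on $[a,b]$ with
\[
\phi^G(x)=f^G(x)\ominus(k\odot x)^G=f^G(x)\ominus k .
\]
Next I would evaluate at the endpoints: from $f^G(a)<k<f^G(b)$ we obtain $\phi^G(a)=f^G(a)\ominus k=f^G(a)/k<1$ and $\phi^G(b)=f^G(b)\ominus k=f^G(b)/k>1$, so $\phi^G(a)$ and $\phi^G(b)$ are of opposite signs in the geometric sense (one below, one above the geometric zero $1$). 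Applying Darboux's Theorem to $\phi$ on $[a,b]$ yields a point $c\in\,]a,b[$ with $\phi^G(c)=1$, i.e. $f^G(c)\ominus k=1$; since $x\ominus y=1$ forces $x=y$, this gives $f^G(c)=k$, as required.

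The main obstacle here is not conceptual but bookkeeping: one must check carefully that $\phi$ genuinely inherits $G$-differentiability on the whole closed interval and that the identity $\phi^G=f^G\ominus k$ holds, which rests on the product/quotient rules for $G$-derivatives together with the positivity of both $f$ and $k\odot x$, and also verify the one-sided $G$-derivatives at the endpoints so that the closed-interval hypothesis of Darboux's Theorem is legitimately met. As an alternative that sidesteps the auxiliary function, one can use the substitution $x=e^{u}$ and set $F(u)=\ln f(e^{u})$, so that $f^G(x)=e^{F'(\ln x)}$; then $f^G(c)=k$ is equivalent to $F'(\ln c)=\ln k$, and the ordinary intermediate value theorem for derivatives applied to $F$ on $[\ln a,\ln b]$ finishes the proof immediately. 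I would present the auxiliary-function version as the main proof, since it stays entirely within the geometric arithmetic and parallels the preceding proof of Darboux's Theorem.
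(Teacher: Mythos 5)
Your proposal is correct and follows essentially the same route as the paper: you form the auxiliary function $\phi(x)=f(x)\ominus(k\odot x)=f(x)/x^{\ln k}$ (the paper's $g(x)=f(x)/x^{\ln k}$), compute $\phi^G=f^G\ominus k$ via the quotient rule, and invoke the $G$-version of Darboux's Theorem to get $\phi^G(c)=1$, i.e. $f^G(c)=k$. Your extra remarks (positivity of $k$, endpoint sign check, and the logarithmic-substitution alternative) only tidy up details the paper leaves implicit.
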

\begin{proof}
Let $g(x)= \frac{f(x)}{x^{\ln k}}.$ Then by the rule of $G$-derivative of quotient of two functions
\[g^G(a)=\frac{f^G(a)}{k} \text{~and~} g^G(b)=\frac{f^G(b)}{k}.\]
Since $f^G(a)<k< f^G(b),$ so $\frac{f^G(a)}{k}$ and $\frac{f^G(b)}{k}$ can not be greater than $1$ at the same time. Therefore, if $g^G(a)> 1$ then $g^G(b)<1.$ Hence, $g(x)$ satisfies the conditions of Darboux's theorem. Thus, there exists at least one point $c\in ]a, b[$ such that 
\[g^G(c)=1, \text{~i.e.~} f^G(c)=k.\]
\end{proof}
\begin{thm}[Rolle's Theorem]
If a function $f$ defined on $[a, b]$ is
\begin{enumerate}
\item[(i)]{ $G$-continuous on $[a, b],$}
\item[(ii)]{$G$-derivable on $]a, b[,$}
\item[(iii)]{$f(a)=f(b),$}
\end{enumerate}
then there exists at least one number $c$ between $a$ and$b$ such that $f^G(c)=1.$
\end{thm}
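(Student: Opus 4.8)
The plan is to reduce Rolle's theorem in $G$-calculus to the classical Rolle's theorem, exploiting the relation $f^G(x)=e^{x f'(x)/f(x)}$ established in equation (\ref{eq:52}) together with the fact that $G$-continuity and $G$-differentiability imply ordinary continuity and differentiability (the first theorem of this section). First I would observe that since $f$ is $G$-continuous on $[a,b]$ and $G$-derivable on $]a,b[$, and since $f$ is positive valued (so that the $G$-derivative makes sense via (\ref{eq:100})), the function $f$ is ordinarily continuous on $[a,b]$ and ordinarily differentiable on $]a,b[$. The hypothesis $f(a)=f(b)$ is literally the same statement in both settings, so the classical Rolle's theorem applies directly to $f$ as a real-valued function.

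The key steps, in order, are: (1) invoke the earlier theorem to pass from $G$-continuity/$G$-differentiability of $f$ to ordinary continuity on $[a,b]$ and ordinary differentiability on $]a,b[$; (2) apply the classical Rolle's theorem to conclude that there exists $c\in\,]a,b[$ with $f'(c)=0$; (3) substitute into the relation (\ref{eq:52}), namely $f^G(c)=e^{c f'(c)/f(c)}$, and note that since $f(c)\neq 0$ (indeed $f$ is positive) and $f'(c)=0$, the exponent $c f'(c)/f(c)$ equals $0$, hence $f^G(c)=e^0=1$; (4) recall that $1$ is the geometric zero in $\mathbb{R}(G)$, so $f^G(c)=1$ is exactly the assertion that the $G$-derivative vanishes at $c$, completing the proof. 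This mirrors exactly the structure used in the proof of Darboux's Theorem above, which also routed through the ordinary derivative.

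The main obstacle — really the only subtle point — is making sure the hypotheses genuinely deliver ordinary differentiability of $f$ on the open interval and not merely at isolated points; this is covered by the earlier theorem "If a function $f$ is $G$-differentiable and is positive, then it is both $G$-continuous and ordinarily continuous," whose proof in fact shows that $G$-differentiability at a point forces the ordinary limit of $f$ to exist and equal $f(c)$ there, and one should note alongside it that the existence of $f^G(c)=e^{xf'(x)/f(x)}\big|_{x=c}$ as a finite positive number forces $f'(c)$ to exist. One small caveat worth stating explicitly is that the conclusion $f'(c)=0 \Rightarrow f^G(c)=1$ uses $c\neq 0$, which holds automatically since the geometric real line consists of positive reals and $a,b>0$ forces every point of $]a,b[$ to be positive; moreover $f(c)>0$ guarantees the quotient $f'(c)/f(c)$ is well defined. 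No other difficulties arise, and the proof is essentially a two-line reduction once the bridge relation (\ref{eq:52}) and the continuity theorem are in hand.
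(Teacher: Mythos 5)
Your proposal is correct and follows essentially the same route as the paper: reduce to the classical Rolle's theorem via the fact that $G$-continuity and $G$-differentiability imply their ordinary counterparts, then convert $f'(c)=0$ into $f^G(c)=e^{cf'(c)/f(c)}=1$ using relation (\ref{eq:52}). Your version is in fact somewhat more careful than the paper's, which states the same reduction in a few terse lines.
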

\begin{proof}
Since $G$-continuous functions are ordinary continuous and $f'(x)$ exists if $f^G(x)$ exists. So, $f$ satisfies the conditions of Rulle's theorem of Newtonian calculus. So, there exists $c\in ]a, b[$ such that $f'(c)=0.$ Hence
\[f^G(c)=e^{\frac{cf'(c)}{f(c)}}=1.\]
\end{proof}
\begin{thm}[Lagrange's Mean Value Theorem]
If a function $f$ defined on $[a, b]$ is
\begin{enumerate}
\item[(i)]{ $G$-continuous on $[a, b],$}
\item[(ii)]{$G$-derivable on $]a, b[,$}
\end{enumerate}
then there exists at least one $c\in ]a, b[$ such that
\[f^G(c)= \left[\frac{f(b)}{f(a)}\right]^{\frac{1}{\ln(\frac{b}{a})}}\]
\end{thm}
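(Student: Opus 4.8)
The plan is to mimic the classical derivation of the mean value theorem from Rolle's theorem, working entirely inside the geometric arithmetic. Set $M=\left[\frac{f(b)}{f(a)}\right]^{\frac{1}{\ln(b/a)}}$, the quantity we must realise as a $G$-derivative. First I would observe that $M$ is precisely the $G$-slope of the geometric straight line $L(x)=f(a)\oplus M\odot(x\ominus a)$ joining $(a,f(a))$ to $(b,f(b))$ in the geometric coordinate system: at $x=a$ one has $M\odot(a\ominus a)=M\odot 1=1$, so $L(a)=f(a)$; and at $x=b$ one computes $M\odot(b\ominus a)=(b/a)^{\ln M}=f(b)/f(a)$, whence $L(b)=f(a)\cdot f(b)/f(a)=f(b)$.

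Next I would introduce the auxiliary function $g(x)=f(x)\ominus L(x)$. By the previous paragraph $g(a)=f(a)\ominus f(a)=1$ and $g(b)=f(b)\ominus f(b)=1$, so $g$ equals the geometric zero $1$ at both endpoints. Since $f$ is $G$-continuous on $[a,b]$ and $G$-derivable on $]a,b[$, while $L$ (being a geometric straight line; explicitly $L(x)=f(a)a^{-\ln M}x^{\ln M}$) is $G$-differentiable everywhere, the function $g$ is $G$-continuous on $[a,b]$ and $G$-derivable on $]a,b[$. Hence $g$ satisfies the hypotheses of the $G$-version of Rolle's theorem just proved, and there is a point $c\in\,]a,b[$ with $g^G(c)=1$.

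It then remains to evaluate $g^G$. Using \eqref{eq:53} and \eqref{eq:54}, so that the $G$-derivative distributes over geometric sums and differences, together with the facts that the $G$-derivative of the constant $f(a)$ is $1$ and that the $G$-derivative of the geometric line $M\odot(x\ominus a)$ is its $G$-slope $M$ (a direct check gives $e^{xh'(x)/h(x)}=e^{\ln M}=M$ for $h(x)=(x/a)^{\ln M}$), one gets $L^G(x)=1\oplus M=M$ and therefore $g^G(x)=f^G(x)\ominus M$ identically. Setting $x=c$ yields $f^G(c)\ominus M=1$, i.e. $f^G(c)=M$, which is the assertion.

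I do not expect a genuine analytic obstacle here, since everything of substance is already packaged into the $G$-version of Rolle's theorem; the only care needed is bookkeeping — checking that $g(a)$ and $g(b)$ evaluate to the \emph{geometric} zero $1$ (not the ordinary $0$), and that the geometric secant line really has $G$-derivative exactly $M$, so that the identity $g^G=f^G\ominus M$ is legitimate. As a cross-check, and an alternative proof bypassing $g$ entirely, one may instead apply the ordinary mean value theorem to $\varphi(t)=\ln f(e^{t})$ on $[\ln a,\ln b]$, obtaining $\xi$ with $\varphi'(\xi)=\frac{\ln f(b)-\ln f(a)}{\ln b-\ln a}$, and then translate back through the relation $f^G(x)=e^{xf'(x)/f(x)}$ of \eqref{eq:52} with $c=e^{\xi}$.
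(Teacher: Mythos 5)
Your proof is correct and follows essentially the same route as the paper: an auxiliary function forced to take equal values at $a$ and $b$, the geometric Rolle theorem, and the product/quotient rules for $G$-derivatives. Indeed your $g(x)=f(x)\ominus L(x)$ coincides, up to a constant multiplicative factor (which leaves the $G$-derivative unchanged), with the paper's auxiliary function $\phi(x)=x^{\ln k}f(x)$ with $k=\left[\frac{f(b)}{f(a)}\right]^{\frac{1}{\ln(a/b)}}$, so the two arguments are the same in substance, with your secant-line phrasing (and the alternative via the ordinary mean value theorem applied to $\ln f(e^t)$) being only cosmetic variants.
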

\begin{proof}
Les us define a function 
\[\phi(x)=x^{\ln k}.f(x)\]
where the constant $k$ is so determined that $\phi(a)=\phi(b).$
\begin{align*}
&\phi(a)=\phi(b)\\
\Rightarrow & a^{\ln k}.f(a)=b^{\ln k}.f(b)\\
\Rightarrow & \left[\frac{a}{b}\right]^{\ln k}=\frac{f(b)}{f(a).}
\end{align*}
Using natural logarithm to both sides we get
\begin{align*}
k&=\left[\frac{f(b)}{f(a)}\right]^{\frac{1}{\ln(\frac{a}{b})}}=\left[\frac{f(b)}{f(a)}\right]^{\frac{-1}{\ln(\frac{b}{a})}}.
\end{align*}
Now, the function $\phi(x),$ the product of two $G$-derivable and $G$-continuous functions, is itself
\begin{enumerate}
\item[(i)]{ $G$-continuous on $[a, b],$}
\item[(ii)]{$G$-derivable on $]a, b[,$ and}
\item[(iii)]{$\phi(a)=\phi(b).$}
\end{enumerate}
Therefore by Rolle's theorem $\exists c\in ]a, b[$ such that $\phi^G(c)=1.$ But
\begin{align*}
\phi^G(x)&= \frac{d^G}{dx^G}(x^{\ln k}).\frac{d^G}{dx^G}(f(x)), \text{~by the rule of $G$-derivative of product function,}\\
\phi^G(x)&= k.f^G(x).\\
\Rightarrow 1&= \phi^G(c)= k.f^G(c)\\
       f^G(c)&=\frac{1}{k}= \left[\frac{f(b)}{f(a)}\right]^{\frac{1}{\ln(\frac{b}{a})}}.
			\intertext{In geometric sense}
			f^G(c)&=\frac{f(b) \ominus f(a)}{b\ominus a}G.
\end{align*}
\end{proof}
\textit{Note:} In the above theorem, if we replace $b$ by $ah,$ where $h>1,$ then the number $c$ between $a$ and $b$ may be taken as $a.h^{\ln \theta}$ for $1<\theta < e.$ Thus
\begin{equation*}
f^G(a.h^{\ln \theta})= \left[\frac{f(ah)}{f(a)}\right]^{\frac{1}{\ln(\frac{ah}{a})}}
\end{equation*}
or
\begin{align*}
f(ah)&= f(a).\left[f^G(a.h^{\ln \theta})\right]^{\ln h}, \text{~where~} 1<\theta < e.
\intertext{or}
f(a\oplus h)&= f(a)\oplus h\odot f^G(a\oplus h.\theta).
\end{align*}
Now we deduce geometric Taylor's expansion for $f(ah)$ with the help of Rolle's Theorem. Firstly, we have to find $G$-derivative of two important functions as follows.
\begin{lemma}\label{lm:1}
If $y=\left[f^{[n]}(x)\right]^{\frac{\ln^n(\frac{ah}{x})}{n!}}$ then $y^G=\gd \left[y\right]=\frac{\left[f^{[n+1]}(x)\right]^{\frac{\ln^n(\frac{ah}{x})}{n!}}}{\left[f^{[n]}(x)\right]^{\frac{\ln^{(n-1)}(\frac{ah}{x})}{(n-1)!}}}$
\end{lemma}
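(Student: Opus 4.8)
The plan is to reduce the Lemma to the known relation \eqref{eq:52} between the $G$-derivative and the ordinary derivative, namely $g^G(x)=e^{xg'(x)/g(x)}$, which converts the whole computation into an ordinary logarithmic-derivative calculation. First I would set $g(x)=y=\bigl[f^{[n]}(x)\bigr]^{\ln^n(ah/x)/n!}$ and take the ordinary natural logarithm, obtaining
\[
\ln g(x)=\frac{\ln^n(ah/x)}{n!}\,\ln\!\bigl(f^{[n]}(x)\bigr).
\]
Here $ah$ is a constant, so $\ln(ah/x)=\ln(ah)-\ln x$ and the only $x$-dependence sits in the factor $\ln^n(ah/x)$ and in $\ln f^{[n]}(x)$; this is a plain product of two differentiable functions of $x$.

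Next I would differentiate $\ln g$ with respect to $x$ by the product rule. The derivative of $\ln^n(ah/x)$ is $n\,\ln^{n-1}(ah/x)\cdot(-1/x)$, since $\tfrac{d}{dx}\ln(ah/x)=-1/x$; the derivative of $\ln f^{[n]}(x)$ is $\bigl(f^{[n]}\bigr)'(x)/f^{[n]}(x)$. Collecting terms,
\[
\frac{g'(x)}{g(x)}
=-\frac{\ln^{n-1}(ah/x)}{(n-1)!\,x}\,\ln\!\bigl(f^{[n]}(x)\bigr)
+\frac{\ln^n(ah/x)}{n!}\cdot\frac{\bigl(f^{[n]}\bigr)'(x)}{f^{[n]}(x)}.
\]
Multiplying by $x$ and exponentiating gives, via \eqref{eq:52},
\[
y^G=g^G(x)=\exp\!\left(x\,\frac{g'(x)}{g(x)}\right)
=\exp\!\left(\frac{\ln^n(ah/x)}{n!}\cdot\frac{x\bigl(f^{[n]}\bigr)'(x)}{f^{[n]}(x)}\right)
\cdot\exp\!\left(-\frac{\ln^{n-1}(ah/x)}{(n-1)!}\,\ln f^{[n]}(x)\right).
\]
The first exponential is exactly $\bigl[\exp\!\bigl(x(f^{[n]})'/f^{[n]}\bigr)\bigr]^{\ln^n(ah/x)/n!}=\bigl[f^{[n+1]}(x)\bigr]^{\ln^n(ah/x)/n!}$, again by \eqref{eq:52} applied to $f^{[n]}$ (recall $f^{[n+1]}=(f^{[n]})^G$). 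The second exponential is $\bigl[f^{[n]}(x)\bigr]^{-\ln^{n-1}(ah/x)/(n-1)!}$. Combining the two factors yields precisely
\[
y^G=\frac{\bigl[f^{[n+1]}(x)\bigr]^{\ln^n(ah/x)/n!}}{\bigl[f^{[n]}(x)\bigr]^{\ln^{n-1}(ah/x)/(n-1)!}},
\]
which is the asserted formula.

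The only genuinely delicate point — and the step I expect to be the main obstacle — is bookkeeping the exponents and the sign: one must be careful that $\ln(ah/x)$ differentiates to $-1/x$ (not $+1/x$), that the power drops from $n$ to $n-1$ with the factorial adjusting from $n!$ to $(n-1)!$, and that the negative sign turns the surviving $f^{[n]}$-factor into a denominator. Everything else is a mechanical application of \eqref{eq:52} and the ordinary product rule, with no convergence or existence issues beyond the standing assumption that $f$ is sufficiently $G$-differentiable and positive so that all the $f^{[k]}$ are defined and positive on the relevant interval.
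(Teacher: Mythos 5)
Your proposal is correct and follows essentially the same route as the paper: take the ordinary logarithm, differentiate by the product rule (with $\tfrac{d}{dx}\ln(ah/x)=-1/x$), multiply by $x$, exponentiate, and use the relation $g^G=e^{xg'/g}$ to identify the first factor as $\left[f^{[n+1]}(x)\right]^{\ln^n(ah/x)/n!}$ and the second as $\left[f^{[n]}(x)\right]^{-\ln^{n-1}(ah/x)/(n-1)!}$. No substantive differences from the paper's argument.
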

\begin{proof}
\begin{align*}
y&=\left[f^{[n]}(x)\right]^{\frac{\ln^n(\frac{ah}{x})}{n!}}
\end{align*}
Taking logarithm to both sides, we get 
\begin{align*}
&\quad\ln y=\ln f^{[n]}(x).{\frac{\ln^n(\frac{ah}{x})}{n!}}\\
&\Rightarrow\frac{y'}{y}= \frac{\frac{d}{dx}\left(f^{[n]}(x)\right)}{f^{[n]}(x)}.{\frac{\ln^n(\frac{ah}{x})}{n!}} + \ln f^{[n]}(x). \frac{\ln^{n-1)}(\frac{ah}{x})}{(n-1)!}.\frac{\frac{-ah}{x^2}}{\frac{ah}{x}}\text{~(differentiating w.r.t. $x$)}\\
&\Rightarrow e^{x\frac{y'}{y}}= e^{x\frac{f'^{[n]}(x)}{f^{[n]}(x)}.{\frac{\ln^n(\frac{ah}{x})}{n!}}} . e^{x\ln f^{[n]}(x). \frac{\ln^{(n-1)}(\frac{ah}{x})}{(n-1)!}.\frac{-1}{x}}\\
&\Rightarrow y^G= \left[e^{x\frac{f'^{[n]}(x)}{f^{[n]}(x)}}\right]^{\frac{\ln^n(\frac{ah}{x})}{n!}} . e^{\ln \left[f^{[n]}(x)\right]^{-\frac{\ln^{(n-1)}(\frac{ah}{x})}{(n-1)!}}}\\
&\Rightarrow y^G= \left[f^{[n+1]}(x)\right]^{\frac{\ln^n(\frac{ah}{x})}{n!}} .\left[f^{[n]}(x)\right]^{-\frac{\ln^{(n-1)}(\frac{ah}{x})}{(n-1)!}}\\
&\Rightarrow y^G= \frac{\left[f^{[n+1]}(x)\right]^{\frac{\ln^n(\frac{ah}{x})}{n!}}}{\left[f^{[n]}(x)\right]^{\frac{\ln^{(n-1)}(\frac{ah}{x})}{(n-1)!}}}.\\
\end{align*}
\end{proof}
\begin{lemma}\label{lm:2}
If $y=k^{\ln^p(\frac{ah}{x})}$ where $k$ is a constant and $p$ is a positive integer, then $y^G=k^{-p\ln^{(p-1)}(\frac{ah}{x})}.$
\end{lemma}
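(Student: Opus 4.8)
The plan is to reduce the statement to an ordinary-derivative computation through the fundamental relation $f^G(x)=e^{x f'(x)/f(x)}$ recorded in (\ref{eq:52}), in exactly the same spirit as the proof of Lemma~\ref{lm:1}. Set $u=u(x)=\ln\!\left(\tfrac{ah}{x}\right)=\ln(ah)-\ln x$, so that $y=k^{u^{p}}$ and $u'(x)=-\tfrac1x$; here $k$ is a fixed positive constant, which guarantees that all the exponentials below are well defined.

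First I would take natural logarithms to linearize the exponent, obtaining $\ln y=u^{p}\ln k$. Differentiating with respect to $x$ and applying the chain rule to $u^{p}$ gives $\dfrac{y'}{y}=p\,u^{p-1}u'\ln k=-\dfrac{p}{x}\,u^{p-1}\ln k$. Multiplying by $x$ then yields $\dfrac{x y'}{y}=-p\,u^{p-1}\ln k=-p\,\ln^{p-1}\!\left(\tfrac{ah}{x}\right)\ln k$. Substituting this into (\ref{eq:52}) and using $e^{\ln k}=k$ gives
\[
y^{G}=e^{x y'/y}=e^{-p\,\ln^{p-1}(ah/x)\,\ln k}=k^{-p\,\ln^{p-1}(ah/x)},
\]
which is precisely the asserted formula.

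There is no real obstacle here: once (\ref{eq:52}) is invoked the argument is a one-line application of the chain rule, so I expect the write-up to be shorter than that of Lemma~\ref{lm:1}. The only point deserving a remark is the boundary case $p=1$, where $\ln^{(p-1)}(ah/x)=\ln^{0}(ah/x)$ is to be read as $1$ and the formula correctly collapses to $y^{G}=k^{-1}$. This lemma, together with Lemma~\ref{lm:1}, then supplies all the $G$-derivatives needed to differentiate the individual terms of the prospective geometric Taylor expansion of $f(ah)$ obtained from Rolle's Theorem.
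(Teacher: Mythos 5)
Your proof is correct and follows essentially the same route as the paper's: take logarithms of $y=k^{\ln^{p}(ah/x)}$, differentiate to get $\tfrac{xy'}{y}=-p\ln^{p-1}\!\left(\tfrac{ah}{x}\right)\ln k$, and conclude via the relation $y^{G}=e^{xy'/y}$ from (\ref{eq:52}). Your substitution $u=\ln(ah/x)$ merely streamlines the chain-rule step (and incidentally avoids a small typographical slip in the paper's displayed derivative of $\ln(ah/x)$), so no further comment is needed.
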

\begin{proof}
Taking logarithm to the both sides of $y=k^{\ln^p(\frac{ah}{x})},$ we get
\begin{align*}
&\quad\ln y=\ln^p(\frac{ah}{x}).\ln k\\
&\Rightarrow \frac{y'}{y}=\ln k. p\ln^{(p-1)}(\frac{ah}{x}).\frac{\frac{-ah}{x^2}}{\frac{ah}{x^2}} \text{~(differentiating w.r.t. $x$)}\\
&\Rightarrow e^{x\frac{y'}{y}}=k^{-p\ln^{(p-1)}(\frac{ah}{x})}\\
&\Rightarrow y^G=k^{-p\ln^{(p-1)}(\frac{ah}{x})}.
\end{align*}
\end{proof}
\begin{thm}[Geometric Taylor's Theorem]
If a function defined on $[a, ah]$ is such that
\begin{enumerate}
\item[(i)]{the $(n-1)^{\textrm{th}}$ $G$-derivative of $f,$ i.e. $f^{[n-1]}$ is $G$-continuous on $[a, ah],$ and}
\item[(ii)]{the $n^{\textrm{th}}$ $G$-derivative, $f^{[n]}$ exists on $[a, ah]$}
\end{enumerate}
then there exists at least one number $\theta$ between $1$ and $e$ such that
\begin{multline}
f(ah)=f(a).\left[f^{[1]}(a)\right]^{\ln h}.\left[f^{[2]}(a)\right]^{\frac{\ln^2h}{2!}}.\left[f^{[3]}(a)\right]^{\frac{\ln^3h}{3!}}...\\
...\left[f^{[n-1]}(a)\right]^{\frac{\ln^{n-1}h}{(n-1)!}}.\left[f^{[n]}(a.h^{\ln \theta})\right]^{\frac{(1-\ln \theta)^{(n-p)}\ln^n h}{(n-1)!p}}
\end{multline}
\end{thm}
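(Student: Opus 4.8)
The plan is to imitate the classical derivation of Taylor's formula from Rolle's Theorem, but performed multiplicatively inside the geometric field $(\mathbb{R}(G),\oplus,\odot)$. Fix $h$ (taking $h>1$, so that $a<ah$) and a positive integer $p$, and regard $ah$ as a fixed endpoint. First I would introduce the auxiliary function on $[a,ah]$
\[
\phi(x)=f(x)\cdot\bigl[f^{[1]}(x)\bigr]^{\ln(\frac{ah}{x})}\cdot\bigl[f^{[2]}(x)\bigr]^{\frac{\ln^2(\frac{ah}{x})}{2!}}\cdots\bigl[f^{[n-1]}(x)\bigr]^{\frac{\ln^{n-1}(\frac{ah}{x})}{(n-1)!}}\cdot k^{\ln^p(\frac{ah}{x})},
\]
where the constant $k$ is to be pinned down by the requirement $\phi(a)=\phi(ah)$. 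Since every exponent $\ln^j(\frac{ah}{x})$ vanishes at $x=ah$, one automatically has $\phi(ah)=f(ah)$, and evaluating at $x=a$ then forces
\[
f(ah)=f(a)\cdot\prod_{j=1}^{n-1}\bigl[f^{[j]}(a)\bigr]^{\frac{\ln^j h}{j!}}\cdot k^{\ln^p h},
\]
an identity that determines $k$ as soon as the last factor is identified.

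Next I would verify that $\phi$ satisfies the hypotheses of Rolle's Theorem: it is $G$-continuous on $[a,ah]$ and $G$-derivable on $]a,ah[$. This follows because hypothesis (i) makes $f,f^{[1]},\dots,f^{[n-1]}$ all $G$-continuous (a $G$-differentiable function is $G$-continuous), hypothesis (ii) supplies the existence of $f^{[n]}$, and each building block of $\phi$ is $G$-derivable — the blocks $\bigl[f^{[j]}(x)\bigr]^{\ln^j(\frac{ah}{x})/j!}$ by Lemma~\ref{lm:1} and the block $k^{\ln^p(\frac{ah}{x})}$ by Lemma~\ref{lm:2}. Since $\phi(a)=\phi(ah)$ by construction, Rolle's Theorem yields $c\in\,]a,ah[$ with $\phi^G(c)=1$; because $c$ lies strictly between $a$ and $ah$ we may write $c=a\,h^{\ln\theta}$ for some $\theta$ with $1<\theta<e$.

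Then I would compute $\phi^G$ using the product rule for $G$-derivatives, equation~(\ref{eq:53}), extended by induction to a finite product so that $\phi^G$ is the ordinary product of the $G$-derivatives of the individual blocks. Applying Lemma~\ref{lm:1} with running index $j$ and Lemma~\ref{lm:2}, almost everything telescopes: the denominator $\bigl[f^{[j]}(x)\bigr]^{\ln^{j-1}(\frac{ah}{x})/(j-1)!}$ produced by the $j$-th block cancels the numerator produced by the $(j-1)$-st block, and the leading $f^{[1]}(x)$ cancels the $j=1$ denominator, leaving
\[
\phi^G(x)=\bigl[f^{[n]}(x)\bigr]^{\frac{\ln^{n-1}(\frac{ah}{x})}{(n-1)!}}\cdot k^{-p\,\ln^{p-1}(\frac{ah}{x})}.
\]
Evaluating at $x=c$, imposing $\phi^G(c)=1$, using $\ln(\frac{ah}{c})=(1-\ln\theta)\ln h$, solving for $k$, and feeding the result back into the relation $f(ah)=f(a)\cdot\prod_{j=1}^{n-1}[f^{[j]}(a)]^{\ln^j h/j!}\cdot k^{\ln^p h}$ from the first step yields exactly the stated identity, the remainder carrying the exponent $\frac{(1-\ln\theta)^{n-p}\ln^n h}{(n-1)!\,p}$.

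The two differentiations needed are already isolated in Lemmas~\ref{lm:1}–\ref{lm:2}, so the only genuinely delicate point is the bookkeeping in the telescoping step: one must align the exponents $\ln^j(\frac{ah}{x})/j!$ across consecutive blocks so that every intermediate derivative $f^{[m]}$ with $1\le m\le n-1$ cancels and only $f^{[n]}$ and the $k$-term survive, and one must carry along the positivity/admissibility conditions under which the $G$-derivative manipulations are legitimate. Once this cancellation is checked, the remaining work is purely algebraic within $(\mathbb{R}(G),\oplus,\odot)$.
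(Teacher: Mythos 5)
Your proposal follows essentially the same route as the paper's own proof: the same auxiliary function $\phi(x)$ with the undetermined constant $k$ (the paper's $A$), the same determination of the constant via $\phi(a)=\phi(ah)$, the same appeal to Rolle's Theorem to obtain $\phi^G(a\,h^{\ln\theta})=1$ with $\theta\in\,]1,e[$, and the same telescoping computation of $\phi^G$ via Lemma~\ref{lm:1} and Lemma~\ref{lm:2}, followed by solving for the constant and substituting back. The argument is correct and matches the paper's proof in structure and detail.
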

\begin{proof} Condition $(i)$ in the statement implies that $f^{[1]},f^{[2]},f^{[3]},...,f^{[n-1]}$ exists and are continuous on $[a, ah].$ Let us consider the function
\begin{multline}\label{eq:55}
\phi(x)=f(x).\left[f^{[1]}(x)\right]^{\ln(\frac{ah}{x})}.\left[f^{[2]}(x)\right]^{\frac{\ln^2(\frac{ah}{x})}{2!}}.\left[f^{[3]}(x)\right]^{\frac{\ln^3(\frac{ah}{x})}{3!}}...\\
...\left[f^{[n-1]}(x)\right]^{\frac{\ln^{n-1}(\frac{ah}{x})}{(n-1)!}}.A^{\ln^p(\frac{ah}{x})}
\end{multline}
where $A$ is a constant to be determined such that $\phi(ah)=\phi(a).$\\
But, putting $x=ah$ and $x=a$ in (\ref{eq:55}), respectively, we get
\begin{align}\label{eq:57}
\phi(ah)&= f(ah), \text{~and}\nonumber\\
\phi(a)&= f(a).\left[f^{[1]}(a)\right]^{\ln h}.\left[f^{[2]}(a)\right]^{\frac{\ln^2h}{2!}}...\left[f^{[n-1]}(a)\right]^{\frac{\ln^{n-1}h}{(n-1)!}}.A^{\ln^ph}.\nonumber\\
\therefore f(ah)&= f(a).\left[f^{[1]}(a)\right]^{\ln h}.\left[f^{[2]}(a)\right]^{\frac{\ln^2h}{2!}}...\left[f^{[n-1]}(a)\right]^{\frac{\ln^{n-1}h}{(n-1)!}}.A^{\ln^ph}.
\end{align}
Now
\begin{enumerate}
\item[(i)]{$f, f^{[1]},f^{[2]},f^{[3]},...,f^{[n-1]}$ all being continuous on $[a, ah]$, the function $\phi(x)$ is continuous on $[a, ah]$;}
\item[(ii)]{the functions $f, f^{[1]},f^{[2]},f^{[3]},...,f^{[n-1]}$ and $\ln^r(\frac{ah}{x})$ for all $r$ being derivable in $]a, ah[,$ the function $\phi(x)$ is derivable in $]a, ah[;$}
\item[(iii)]{$\phi(ah)=\phi(a).$}
\end{enumerate}
Hence, $\phi(x)$ satisfies all the conditions of Rolle's Theorem and hence there exists one real number $\theta \in ]1, e[$ such that $\phi^G(a.h^{\ln \theta})=1.$ 

Now, using Lemma \ref{lm:1} and Lemma \ref{lm:2}
\begin{multline*}
\phi^G (x)=f^{[1]}(x).\frac{\left[f^{[2]}(x)\right]^{\ln(\frac{ah}{x})}}{f^{[1]}(x)}.\frac{\left[f^{[3]}(x)\right]^{\frac{\ln^2(\frac{ah}{x})}{2!}}}{\left[f^{[2]}(x)\right]^{\ln(\frac{ah}{x})}}.\frac{\left[f^{[4]}(x)\right]^{\frac{\ln^3(\frac{ah}{x})}{3!}}}{\left[f^{[3]}(x)\right]^{\frac{\ln^2(\frac{ah}{x})}{2!}}}...\\
...\frac{\left[f^{[n]}(x)\right]^{\frac{\ln^{(n-1)}(\frac{ah}{x})}{(n-1)!}}}{\left[f^{[n-1]}(x)\right]^{\frac{\ln^{(n-2)}(\frac{ah}{x})}{(n-2)!}}}.A^{-p\ln^{(p-1)}(\frac{ah}{x})}
\end{multline*}
which gives
\begin{align*}
&\phi^G (x)= \left[f^{[n]}(x)\right]^{\frac{\ln^{(n-1)}(\frac{ah}{x})}{(n-1)!}}.A^{-p\ln^{(p-1)}(\frac{ah}{x})}\\
\Rightarrow &1=\phi^G (a.h^{\ln \theta})=\left[f^{[n]}(a.h^{\ln \theta})\right]^{\frac{\ln^{(n-1)}(\frac{ah}{a.h^{\ln \theta}})}{(n-1)!}.A^{-p\ln^{(p-1)}(\frac{ah}{a.h^{\ln \theta}})}}\\
  \Rightarrow  &A^{p\left[\ln(h^{1-{\ln \theta}})\right]^{(p-1)}}=\left[f^{[n]}(a.h^{\ln \theta})\right]^{\frac{\left[\ln(h^{1-\ln \theta})\right]^{(n-1)}}{(n-1)!}}\\
	\Rightarrow  &A^{{p\left[(1-\ln \theta)\ln h\right]}^{(p-1)}}=\left[f^{[n]}(a.h^{\ln \theta})\right]^{\frac{\left[(1-\ln \theta)\ln h\right]^{(n-1)}}{(n-1)!}}\\
	\Rightarrow  &A=\left[f^{[n]}(a.h^{\ln \theta})\right]^{\frac{\left[(1-\ln \theta)\ln h\right]^{(n-1)}}{{{p\left[(1-\ln \theta)\ln h\right]}^{(p-1)}}(n-1)!}}\\
	\Rightarrow  &A=\left[f^{[n]}(a.h^{\ln \theta})\right]^{\frac{\left[(1-\ln \theta)\ln h\right]^{(n-p)}}{(n-1)!p}}\\
	\Rightarrow  &A=\left[f^{[n]}(a.h^{\ln \theta})\right]^{\frac{(1-\ln \theta)^{(n-p)}\ln^{(n-p)} h}{(n-1)!p}}.
\end{align*}
Now substituting the value of $A$ in (\ref{eq:57}), we get
\begin{align}\label{eq:58}
f(ah)&= f(a).\left[f^{[1]}(a)\right]^{\ln h}.\left[f^{[2]}(a)\right]^{\frac{\ln^2h}{2!}}...\left[f^{[n-1]}(a)\right]^{\frac{\ln^{n-1}h}{(n-1)!}}.\left[f^{[n]}(a.h^{\ln \theta})\right]^{\frac{(1-\ln \theta)^{(n-p)}\ln^n h}{(n-1)!p}}.
\end{align}
\end{proof}
\subsection{Geometric Taylor's Series}
In (\ref{eq:58}), the term $R_n=\left[f^{[n]}(a.h^{\ln \theta})\right]^{\frac{(1-\ln \theta)^{(n-p)}\ln^n h}{(n-1)!p}}$ is called Taylor's remainder after $n$ terms. Since, $0<1-\ln \theta<1$ as $1< \theta < e,$ so, $(1-\ln \theta)^{n-p}\rightarrow 0$ as $n\rightarrow \infty.$ Therefore, if $f$ possesses $G$-derivative of every order in $[a, ah]$ then $R_n\rightarrow 1$ as $n\rightarrow \infty.$ Then Taylor's expansion becomes
\begin{align}\label{eq:59}
f(ah)&= f(a).\left[f^{[1]}(a)\right]^{\ln h}.\left[f^{[2]}(a)\right]^{\frac{\ln^2h}{2!}}...\left[f^{[n]}(a)\right]^{\frac{\ln^{n}h}{n!}}...=\Pi_{n=0}^\infty \left[f^{[n]}(a)\right]^{\frac{\ln^{n}h}{n!}}.
\end{align}
This expression can be written in terms of geometric operations as
\begin{multline}\label{eq:60}
f(a\oplus h)= f(a)\oplus h\odot f^{[1]}(a) \oplus \frac{h^{2_G}}{2!_G}G\odot f^{[2]}(a)\oplus...\oplus\frac{h^{n_G}}{n!_G}G\odot f^{[n]}(a)\oplus...\\
= _G\sum_{n=0}^\infty \frac{h^{n_G}}{n!_G}G\odot f^{[n]}(a),
\end{multline}
where $n!_G= e^{n!}$ and $h^{n_G}=h^{\ln^{(n-1)}h}.$ The equivalent expressions (\ref{eq:59}) and (\ref{eq:60}) will be called respectively as Taylor's product and Geometric Taylor's series. If we put $a=1$ and $h=x$ in (\ref{eq:59}), we get
\begin{align}\label{eq:61}
f(x)&= f(1).\left[f^{[1]}(1)\right]^{\ln x}.\left[f^{[2]}(1)\right]^{\frac{\ln^2x}{2!}}...\left[f^{[n]}(1)\right]^{\frac{\ln^{n}x}{n!}}...=\Pi_{n=0}^\infty \left[f^{[n]}(1)\right]^{\frac{\ln^{n}x}{n!}}.
\end{align}

If $f$ satisfies the conditions of Taylor,s Theorem in $[a, ah]$ and $x$ is any point of $[a, ah]$ then it also satisfies the conditions in the interval $[a, x].$ Then replacing $ah$ by $x$ or $h$ by $x/a$ in {\ref{eq:59}}, we get another form of Taylor's product as follows:
\begin{align}\label{eq:62}
f(x)&= f(a).\left[f^{[1]}(a)\right]^{\ln(\frac{x}{a})}.\left[f^{[2]}(a)\right]^{\frac{\ln^2(\frac{x}{a})}{2!}}...\left[f^{[n]}(a)\right]^{\frac{\ln^{n}(\frac{x}{a})}{n!}}...=\Pi_{n=0}^\infty \left[f^{[n]}(a)\right]^{\frac{\ln^{n}(\frac{x}{a})}{n!}}.
\end{align}
\section{Some applications of $G$-calculus}
\subsection{Expansion of some useful functions in Taylor's product}
\begin{enumerate}
\item[(i)]{}
With the help of geometric Taylor's series, we can express different functions as a product of different functions. For, let $f(x)=e^x.$ Then $f^{[1]}(x)=f^{[2]}(x)=f^{[3]}(x)=.....=e^x.$ Hence $f(1)=f^{[1]}(1)=f^{[2]}(1)=f^{[3]}(1)=.....=e.$ Therefore from (\ref{eq:61})
\begin{align*}
e^x&= e.e^{\ln x}.e^{\frac{\ln^2x}{2!}}.e^{\frac{\ln^3x}{3!}}.....\\
   &=e^{1+ \ln x + \frac{\ln^2x}{2!}+ \frac{\ln^3x}{3!}+...}
\end{align*}
\item[(ii)]{}
Let $f(x)=\sin(x)$ We can approximate the value of $f(x)$ at different points, say at $x=\frac{\pi}{6}.$ In the figure \ref{apr:1}, we have given a comparison of first order linear approximation and first order exponential approximation with the help of  and geometric Taylor's series respectively.

 By ordinary Taylor's series, first order linear approximation is given by 
\begin{align*}
L(x)&= f(\frac{\pi}{6}) + (x- \frac{\pi}{6})f'(\frac{\pi}{6})\\
    &=\sin(\frac{\pi}{6}) + (x- \frac{\pi}{6})\cos(\frac{\pi}{6})\\
\text{i.e.~} L(x)&= \frac{1}{2} + (x- \frac{\pi}{6})\frac{\sqrt{3}}{2}.
\end{align*}
 By geometric Taylor's series, first order exponential approximation is given by 
\begin{align*}
E(x)&= f(\frac{\pi}{6}).\left[f^{[1]}(\frac{\pi}{6})\right]^{\ln\left(\frac{x}{\pi/6}\right)}\\
    &=\sin(\frac{\pi}{6}).\left[e^{\frac{\pi}{6}\cot(\frac{\pi}{6})}\right]^{\ln\left(\frac{6x}{\pi}\right)}\\
\text{i.e.~} E(x)&= \frac{1}{2}.\left[e^{\frac{\pi}{2\sqrt{3}}}\right]^{\ln\left(\frac{6x}{\pi}\right)}.
\end{align*}
For the graphical approximation, we have made the Table \ref{tab:1} for $\sin(x), L(x)$ and $E(x),$ then plotting the values we get the figure \ref{apr:1}.
\begin{table}[ht]
\caption{Approximation at $x=\frac{\pi}{6}$} 
\centering 
\begin{tabular}{|c| c| c|c|} 
\hline
x.  & sin(x) &L(x)& E(x)\\ [0.5ex]
\hline
 -2	&-0.9093	&-1.6855&-\\ 
\hline   
 -1.6	&-0.99957	&-1.33909&	-\\
\hline 
-1.2	&-0.93204	&-0.99268	&-\\
\hline 
-0.8	&-0.71736	&-0.64627	&-\\
\hline 
-0.4	&-0.38942	&-0.29986	&-\\
\hline 
0	&0	&0.04655	&-\\
\hline  
0.4	&0.389418	&0.39296	&0.431021\\
\hline 
0.8	&0.717356	&0.73937	&0.631633\\
\hline 
1.2	&0.932039	&1.085781	&0.789858\\
\hline  
1.6	&0.999574	&1.432191	&0.925617\\
\hline 
2	&0.909297	&1.778601	&1.046793\\
\hline 
2.4	&0.675463	&2.125011	&1.157486\\
\hline 
2.8	&0.334988	&2.471421	&1.260159\\
\hline 
3.2	&-0.05837	&2.817831	&1.356432\\
\hline 
3.6	&-0.44252	&3.164242	&1.447438\\
\hline  
4	&-0.7568	&3.510652	&1.534007\\
\hline  
4.4	&-0.9516	&3.857062	&1.61677\\
\hline 
4.8	&-0.99616	&4.203472	&1.69622\\
\hline 
5.2	&-0.88345	&4.549882	&1.77275\\[1ex]
\hline 
\end{tabular}
\label{tab:1}
\end{table}
\begin{figure}
\centering
\includegraphics[width=0.5\textwidth]{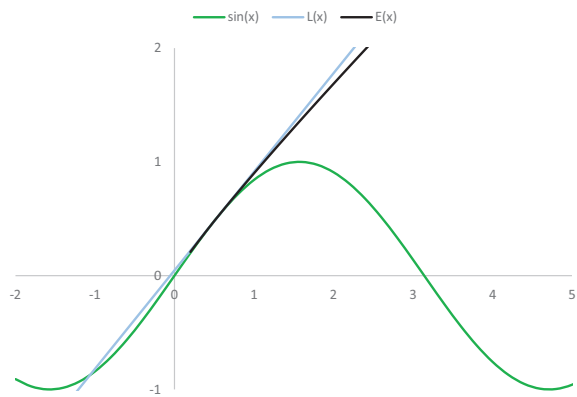}
\caption{Exponential Approximation}
\label{apr:1}
\end{figure}
From the FIGURE \ref{apr:1}, it is clear that geometric Taylor's series gives better approximated value of the function $f(x)=\sin(x)$ at $x=\frac{\pi}{6}$ than Taylor's approximation given by Michael Coco in \cite{Coco} with the help of multiplicative derivative
\begin{equation*}
f^*(x)=\lim_{h\to 0}\left[\frac{f(x+h)}{f(x)}\right]^{\frac{1}{h}}.
\end{equation*}
 \item[(iii)]{}
$G$-derivative gives total growth of a growth function. For, let $y=a.b^x,$ where $a=$initial amount$>0$, $b=$ growth(or decay) factor, $x=$time and $y=$total amount after time period $x.$ Then, $\frac{d^Gy}{dx^G}=b^x,$ which is the total growth or total decay according to $b>1$ or $0<b<1$ respectively. Thus, a positive valued time function is given, we can calculate total growth and growth factor using G-derivative.
\item[(iv)]{}It is easy to find ordinary derivative of complicated product or quotient functions with the help of $G$-derivative. For let, $f(x)=\frac{e^{-1/x^2}}{x^n\sin x}.$ Then
\begin{align*}
f^G(x)&= \frac{\frac{d^G}{dx^G}(e^{-1/x^2})}{\frac{d^G}{dx^G}(x^n).\frac{d^G}{dx^G}(\sin x)}=\frac{e^{2/x^2}}{e^n.e^{x\cot x}}=e^{\frac{2}{x^2}-n-x\cot x}
\intertext{Therefore ordinary derivative is given by}
f'(x)&=\frac{f(x)\ln\left(f^G(x)\right)}{x}=\frac{e^{-1/x^2}}{x^{n+1}\sin x}\left(\frac{2}{x^2}-n-x\cot x\right).
\end{align*}
\item[(v)]{(Price Elasticity)} With the aid of G-derivative, we can find price elasticity to predict the impact of price changes on unit sales and to guide the firm’s profit-maximizing pricing decisions. According to \cite{SamuelsonMarks}(page no. 83), the price elasticity of demand is the ratio of the percentage change in quantity and the percentage change in the good’s price, all other factors held constant. If $x$ and $y$ represents price and quantity respectively, then the price elasticity $E_p$ is given by
\begin{align*}
E_p&= \frac{\% \textrm{~change in $y$}}{\% \textrm{~change in $x$}}\\
   &= \frac{{\Delta y/y}}{{\Delta x}/x}\\
	 &=x\frac{\frac{\Delta y}{\Delta x}}{y}
	\intertext{If price change is very small to the initially considered price, then making $\Delta x \rightarrow 0,$ we get}
E_p &= x\frac{y'}{y}\\
    &=\ln\left(e^{\frac{xy'}{y}}\right)=\ln(y^{[1]})\\
		\textrm{or~} y^{[1]}&= e^{E_p.}
\end{align*}
where $y^{[1]}$ is the $G$-derivative of $y.$ Thus, natural logarithm of $G$-derivative gives the price elasticity. In other words we can say that, $G$-derivative of quantity with respect to the price is the exponential price elasticity. We know that
\begin{equation*}
\text{Resiliency}=e^{\text{(elasticity)}}=e^{E_p}.
\end{equation*}
Therefore, $G$-derivative gives the resiliency. 
\end{enumerate}
\section{Acknowledgment}
It is pleasure to thank Prof. M. Grossman and Prof. Jane Grossman for their constructive suggestions and inspiring comments regarding the improvement of the $G$-calculus. 
\section{Conclusion}
Based on the work of Grossman and Katz's \cite{Grossman} and Grossman \cite{Grossman83}, we studied some results on bigeometric calculus in our paper \cite{KhirodBipan3}, here we have discussed more about the said topic. 
In geometric calculus, Grossman and Katz took ordinary sum($+$) to produce increment to the independent variable $x$ such as $x_0, x_0+h, x_0+2h,...$ In that case some problem arise to discuss independently about the geometric arithmetic system $(\oplus, \ominus, \odot, \oslash).$ In $G$-calculus, geometric sum($\oplus$) is taken to produce increment to the independent variable $x$ such as $x_0, x_0\oplus h, x_0\oplus e^2\odot h,$...(equivalently $a, ah, ah^2, ah^3,...$). Instead of mixing the ordinary arithmetic system($+,-,\times, \div $) and geometric arithmetic system $(\oplus, \ominus, \odot, \oslash)$, we are trying to formulate basic identities independently. As well as in \cite{KhirodBipan}, here, we are trying to bring up researchers' attention to $G$-calculus and its applications to different branches of analysis. Advantages of $G$-calculus will be apparent when it becomes useful in different practical fields namely finance, economics, statistics etc. 

\thebibliography{00}

\bibitem{BashirovRiza} A.E. Bashirov, M. R\i za,  \textit{On Complex multiplicative differentiation}, TWMS J. App. Eng. Math. 1(1)(2011) 75-85.
\bibitem{BashirovMisirh} A. E. Bashirov, E. M\i s\i rl\i, Y. Tando\v{g}du, A.  \"{O}zyap\i c\i, \textit{On modeling with multiplicative differential equations}, Appl. Math. J. Chinese Univ. 26(4)(2011) 425-438.
\bibitem{BashirovKurpinar} A. E. Bashirov, E. M. Kurp\i nar, A. \"{O}zyapici,   \textit{Multiplicative Calculus and its applications}, J. Math. Anal. Appl. 337(2008) 36-48.
\bibitem{KhirodBipan} Khirod Boruah and Bipan Hazarika, \textit{Application of Geometric Calculus in Numerical Analysis and Difference Sequence Spaces}, arXiv:1603.09479v1, 31 May 2016.
\bibitem{KhirodBipan3} Khirod Boruah and Bipan Hazarika, \textit{Some basic properties of G-Calculus and its applications in  numerical analysis}, arXiv:1607.07749v1, 24 July 2016.
\bibitem{CakmakBasar} A. F. \c{C}akmak, F.  Ba\c{s}ar,  \textit{On Classical sequence spaces and non-Newtonian calculus}, J. Inequal. Appl. 2012, Art. ID 932734, 12pp.
\bibitem{Campbell} Duff Campbell, \textit{Multiplicative Calculus and Student Projects}, Department of Mathematical Sciences, United States Military Academy, West Point, NY,10996, USA.

\bibitem{Coco} Michael Coco, \textit{Multiplicative Calculus}, Lynchburg College.
\bibitem{GrossmanKatz} M. Grossman, R. Katz, \textit{Non-Newtonian Calculus}, Lee Press, Piegon Cove, Massachusetts, 1972.
\bibitem{Grossman83} M. Grossman, \textit{Bigeometric Calculus: A System with a scale-Free Derivative}, Archimedes Foundation, Massachusetts, 1983.
\bibitem{Grossman} M. Grossman, \textit{An Introduction to non-Newtonian calculus}, Int. J. Math. Educ. Sci. Technol. 10(4)(1979) 525-528.

\bibitem{JaneGrossman1} Jane Grossman, M. Grossman, R. Katz, \textit{The First Systems of Weighted Differential and Integral Calculus}, University of Michigan, 1981.
\bibitem{JaneGrossman2} Jane Grossman, \textit{Meta-Calculus: Differential and Integral}, University of Michigan, 1981.
\bibitem{KADAK3} U. Kadak and Muharrem \"{O}zl\"{u}k,  \textit{Generalized Runge-Kutta method with
respect to non-Newtonian calculus}, Abst. Appl. Anal., Vol. 2015 (2015), Article ID 594685, 10 pages.
\bibitem{SamuelsonMarks} W.F. Samuelson, S.G. Mark, \textit{Managerial Economics}, Seventh Edition, 2012.
\bibitem{Stanley}
 D. Stanley, \textit{A multiplicative calculus}, Primus IX 4 (1999) 310-326.
 \bibitem{TekinBasar} S. Tekin, F. Ba\c{s}ar,   \textit{Certain Sequence spaces over the non-Newtonian complex field}, Abstr. Appl. Anal. 2013. Article ID 739319, 11 pages. 
 
 \bibitem{TurkmenBasar} Cengiz T\"{u}rkmen and F. Ba\c{s}ar, \textit{Some Basic Results on the sets of Sequences with Geometric Calculus}, Commun. Fac. Fci. Univ. Ank. Series A1. Vol G1. No 2(2012) 17-34. 
 
\bibitem{Uzer10} A. Uzer, \textit{Multiplicative type Complex Calculus as an alternative to the classical calculus}, Comput. Math. Appl. 60(2010), 2725-2737.
\end{document}